\theoremstyle{plain}
\newtheorem{theorem}{Theorem}[section]
\newtheorem{lemma}[theorem]{Lemma}
\newtheorem{prop}[theorem]{Proposition}
\theoremstyle{definition}
\newtheorem{eg}[theorem]{Example}
\newtheorem{rmk}[theorem]{Remark}
\newcommand{\Q}{\mathbb{Q}}
\newcommand{\Z}{\mathbb{Z}}
\newcommand{\R}{\mathbb{R}}
\newcommand{\C}{\mathbb{C}}
\newcommand{\F}{\mathbb{F}}
\newcommand{\proj}{\mathbb{P}}
\newcommand{\OK}{\mathcal{O}}
\DeclareMathOperator{\car}{char}
\DeclareMathOperator{\tr}{tr}
\DeclareMathOperator{\Jac}{Jac}
\DeclareMathOperator{\Gal}{Gal}
\DeclareMathOperator{\Aut}{Aut}
\DeclareMathOperator{\Frob}{Frob}
\DeclareMathOperator{\Ind}{Ind}
\DeclareMathOperator{\sgn}{sgn}
\DeclareMathOperator{\GL}{GL}
\DeclareMathOperator{\red}{red}
\DeclareMathOperator{\lift}{lift}
\DeclareMathOperator{\Dic}{Dic}
\DeclareMathOperator{\id}{id}
\newcommand{\lagrange}[2]{\left( \dfrac{#1}{#2}\right)}
\begin{document}

	\title{Wild Galois representations: a family of hyperelliptic curves with large inertia image}

	\author{NIRVANA COPPOLA\\
		Vrije Universiteit Amsterdam\\
		e-mail\textup{: \texttt{n.coppola@vu.nl}}}
	

	\maketitle

	\begin{abstract}
		In this work we generalise the main result of \cite{1812.05651} to the family of hyperelliptic curves with potentially good reduction over a $p$-adic field which have genus $g=\dfrac{p-1}{2}$ and the largest possible image of inertia under the $\ell$-adic Galois representation associated to its Jacobian. We will prove that this Galois representation factors as the tensor product of an unramified character and an irreducible representation of a finite group, which can be either equal to the inertia image (in which case the representation is easily determined) or a $C_2$-extension of it. In this second case, there are two suitable representations and we will describe the Galois action explicitly in order to determine the correct one.
	\end{abstract}
	
	\section{Introduction}\label{intro}
	A \emph{hyperelliptic curve} over a field $K$ is a smooth, projective, geometrically integral algebraic curve $X$ of genus $g \geq 1$ that has the structure of a degree $2$-cover of $\proj^1$. Elliptic curves are special cases of hyperelliptic curves, with $g=1$ (and a rational point). Similarly as with elliptic curves, when $\car (K) \neq 2$, any hyperelliptic curve over $K$ can be defined by an affine Weierstrass equation of the form
	\[
	X : y^2 = f(x),
	\]
	where $f(x) \in K[x]$ is a square-free polynomial. By this, we mean that the function field of $X$ is isomorphic to $K(x)[y]/(y^2-f(x))$, as a quadratic extension of $K(x)$. It is also well known that $\deg(f)$ is equal to $2g+1$ or $2g+2$.
	
	One important difference between elliptic curves and higher genus hyperelliptic curves is that the set of points on the latter does not have a group structure. However, it is possible to associate an abelian variety to any curve $X$, namely the \emph{Jacobian variety} $\Jac(X)$, and study the group structure on it. For the definition of the Jacobian of a curve, see e.g. \cite[\S 1]{milne7}. If $\overline K$ is a fixed separable closure of $K$, we denote by $\Jac(X)(\overline K)$ the set of points defined over $\overline K$ and lying on $\Jac(X)$. In particular we can define, for a prime $\ell$, the $\ell$-adic Tate module, which is
	\[
	T_\ell \Jac(X) = \varprojlim_n \Jac(X)[\ell^n],
	\]
	where by $\Jac(X)[m]$ we denote the subgroup of $m$-torsion points of $\Jac(X)(\overline K)$. It can be proved that, for $\ell$ different from the characteristic of $K$, this is a free $\Z_\ell$-module of rank $2g$ (see \cite[\S 15 Theorem 15.1]{milne5}).
	
	Let $G_K=\Gal (\overline K/K)$ be the absolute Galois group of $K$. Then we have a linear action on the points of $\Jac(X)$, and an induced action on the Tate modules, thus we can define, for any prime $\ell$ (different from $\car (K)$) a Galois representation
	\[
	\rho_\ell : G_K \rightarrow \Aut(T_\ell \Jac(X)),
	\]
	which is a $2g$-dimensional representation. After taking the tensor product with $\overline \Q_\ell$, and fixing a basis for $T_\ell \Jac(X) \otimes \overline{\Q}_\ell$, we can and will consider $\Aut(T_\ell \Jac (X))$ as a subgroup of $\GL_{2g} (\overline{\Q}_\ell)$. 
	
	From now on, we assume that $K$ is a non-archimedean local field of characteristic $0$, i.e. a finite extension of $\Q_p$ for some prime $p$; we also assume that $p \neq \ell$. We denote by $v_K$ the valuation on $K$, by $\OK_K$ the ring of integers, by $\pi_K$ a uniformiser, by $k$ the residue field, with algebraic closure $\overline k$, and by $K^{nr}$ the maximal unramified extension of $K$ contained in $\overline K$. Then the Galois group $G_K$ has a normal subgroup with pro-cyclic quotient, namely the \emph{inertia subgroup}:
	\[
	I_K = \{ \sigma \in G_K | \sigma (x) = x \ \forall x \in \overline k\}.
	\]
	
	In fact, $I_K=\Gal(\overline K / K^{nr})$ and the quotient is isomorphic to $\Gal(\overline k/k)$, thus it is generated by any element of $G_K$ that acts as Frobenius on the residue field, i.e. as $x \mapsto x^{|k|}$. We call any such generator a Frobenius element of $K$, and we denote it by $\Frob_K$. In Section \ref{frobenius} we fix a precise choice of $\Frob_K$. Therefore, in order to describe the Galois representation $\rho_\ell$, we need to compute the image of the inertia elements and the image of Frobenius, either by fixing a basis for $\Aut(T_\ell \Jac(X) )$ and computing the matrices representing these elements, or, as in \cite[Lemma 3]{c9f01b439a3c48dfac7d37fb5b576af4}, by expressing $\rho_\ell$ as a sum of irreducible representations, each equal to the tensor product of an unramified character and a representation of some finite group.
	
	In this work, we shall generalise the main result of \cite{1812.05651} to higher genus curves. Namely, we will consider hyperelliptic curves with bad, potentially good reduction at the largest wild prime. Such curves are defined by an equation $y^2=f(x)$ where $f$ is square-free of odd degree equal to the residue characteristic of $K$, see \cite[\S 2 Corollary 2(a)]{10.2307/1970722}, and we will focus on the case where the image of inertia under $\rho_\ell$ is the largest possible.
	
	In Section \ref{statement} we give the statement of the main result of this work. In Section \ref{inertia} we describe explicitly the action of inertia in our setting, using \cite[Theorem 10.3]{M2D2} and \cite[\S 8.2 Proposition 25]{Serre}. In Section \ref{frobenius} we use a good model for the family of curves we are interested in, to compute the eigenvalues of Frobenius. Finally in Section \ref{proof} we give details about the proof, in particular in the case where the inertia degree of $K/\Q_p$ is odd, when the work of Sections \ref{inertia} and \ref{frobenius} is not sufficient to describe the full Galois representation.
	
	\section{Statement of the main results}\label{statement}
	
	Let $p$ be an odd prime and let $K$ be a finite extension of $\Q_p$. Let $X$ be a hyperelliptic curve over $K$ defined by an equation of the form:
	\[
	X : y^2 = f(x) \qquad \text{with } f \in K[x] \text{ monic, square-free, of degree }p. \quad (*)
	\]
	
	Recall that the genus $g$ of the curve satisfies $p=2g+1$. Suppose that $X$ has potentially good reduction over $K$, i.e. there exists a finite extension $F/K$ such that the base change $X \times_K F$ of the curve $X$ to $F$ has good reduction. Then by the Criterion of N\'eron-Ogg-Shafarevich (see \cite[\S 2 Theorem 2(ii)]{10.2307/1970722}), the Galois representation $\rho_\ell$ restricted to inertia factors through a finite quotient. We assume that this quotient has the largest possible size. The first result characterises the hyperelliptic curves that satisfy this assumption. Let $\alpha_1,\dots,\alpha_p \in \overline K$ be the roots of $f$, $\Delta$ be the discriminant of $f$, $G=\Gal(K(\{\sqrt{\alpha_i-\alpha_j}\}_{i \neq j})/K)$, and let $I_G$ be the inertia subgroup of $G$.
	
	\begin{prop}\label{PROP:characterisation}
		Let $X$ be a hyperelliptic curve over a $p$-adic field $K$ defined by an equation $(*)$ with potentially good reduction, and let $\rho_\ell$ be the $\ell$-adic Galois representation associated to it. Then $\rho_\ell(I_K) \cong I_G$. Moreover, $|\rho_\ell(I_K)|$ is maximal and equal to $2p(p-1)$ if and only if 
		\begin{enumerate}
			\item the Galois group of the splitting field of $f$ over $K^{nr}$ is isomorphic to the Frobenius group $C_p \rtimes C_{p-1}$, and
			\item $v_K(\Delta)$ is odd.
		\end{enumerate} 
		
		The structure of the group $I_G$ when these two conditions hold is that of the semidirect product $C_p \rtimes C_{2(p-1)}$ of $C_p$ and $C_{2(p-1)}$ which has a degree $2$ quotient isomorphic to the Frobenius group $C_p \rtimes C_{p-1}$.
	\end{prop}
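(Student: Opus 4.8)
The plan is to prove everything by analysing the tower of fields $K^{nr} \subseteq N' \subseteq M'$, where $N' = K^{nr}(\alpha_1,\dots,\alpha_p)$ is the splitting field of $f$ over $K^{nr}$ and $M' = K^{nr}(\{\sqrt{\alpha_i-\alpha_j}\}_{i\neq j})$. First I would record that, because $f$ is monic so that $\sum_i\alpha_i \in K$, each root is a $\Q$-linear combination of the differences $\alpha_i-\alpha_j$; hence $N' \subseteq M'$, and $I_G \cong \Gal(M'/K^{nr})$ sits in the exact sequence
\[
1 \longrightarrow \Gal(M'/N') \longrightarrow \Gal(M'/K^{nr}) \longrightarrow \Gal(N'/K^{nr}) \longrightarrow 1,
\]
so that $|I_G| = |\Gal(M'/N')|\cdot|\Gal(N'/K^{nr})|$. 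The identification $\rho_\ell(I_K)\cong I_G$ itself I would extract from \cite[Theorem 10.3]{M2D2} together with \cite[\S 8.2 Proposition 25]{Serre}: these describe the inertia action on $T_\ell\Jac(X)$ explicitly through the action on the square roots of differences of the roots, and exhibit it as \emph{faithful} on $\Gal(M'/K^{nr})$, which is exactly $I_G$.

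The next step is to bound the top of the sequence. Since $N'/K^{nr}$ is totally ramified over a field with algebraically closed residue field, $\Gal(N'/K^{nr})$ is of the form $P\rtimes C$ with $P$ a $p$-group and $C$ cyclic of order prime to $p$; in particular it is solvable. As a transitive subgroup of $S_p$ its $p$-Sylow is forced to be $C_p$, and since $C$ normalises $P=C_p$ the whole group embeds in $N_{S_p}(C_p)=C_p\rtimes C_{p-1}$. Thus $|\Gal(N'/K^{nr})| \le p(p-1)$, with equality exactly when the group is the full Frobenius group — this is precisely condition (1).

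The heart of the argument is the bottom of the sequence. Because $N'$ has algebraically closed residue field and residue characteristic $p\neq 2$, every unit of $N'$ is a square, so $(N')^\times/\big((N')^\times\big)^2 \cong \Z/2$ is detected by the parity of the valuation. Consequently $M' = N'(\{\sqrt{\alpha_i-\alpha_j}\})$ is obtained by adjoining square roots that are either already present (even valuation) or all give the \emph{same} quadratic extension $N'(\sqrt{\pi_{N'}})$ (odd valuation); hence $|\Gal(M'/N')|\le 2$, giving $|I_G|\le 2p(p-1)$ unconditionally. To decide when this extension is nontrivial, I would use that under condition (1) the group $C_p\rtimes C_{p-1}$ acts sharply $2$-transitively on the roots, hence transitively on ordered pairs, so all differences $\alpha_i-\alpha_j$ share one valuation $\delta$ (the valuation being Galois-invariant). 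Comparing $w(\Delta)=2\binom{p}{2}\delta$ with $w(\Delta)=e\,v_K(\Delta)=p(p-1)v_K(\Delta)$, where $w$ is the normalised valuation on $N'$ and $e=p(p-1)$ its ramification index over $K^{nr}$, yields $\delta=v_K(\Delta)$. Therefore, given condition (1), $|\Gal(M'/N')|=2$ iff $\delta$ is odd iff $v_K(\Delta)$ is odd, i.e. condition (2); combined with the bound, $|I_G|=2p(p-1)$ iff both conditions hold.

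Finally, for the structure of $I_G$ I would argue group-theoretically: the wild inertia $P\cong C_p$ is normal with cyclic tame quotient of order $2(p-1)$, and since $\gcd(p,2(p-1))=1$, Schur--Zassenhaus splits the extension as $C_p\rtimes C_{2(p-1)}$. The conjugation map $C_{2(p-1)}\to\Aut(C_p)\cong C_{p-1}$ cannot be injective, and since the quotient by $\Gal(M'/N')\cong C_2$ must recover the Frobenius group $C_p\rtimes C_{p-1}$ with faithful $C_{p-1}$-action, its kernel is forced to be the unique involution of $C_{2(p-1)}$; this identifies $\Gal(M'/N')$ with that central involution and exhibits the required degree-$2$ Frobenius quotient. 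I expect the main obstacle to be the first step, namely extracting from \cite[Theorem 10.3]{M2D2} a clean and faithful description of $\rho_\ell|_{I_K}$ in terms of $M'/K^{nr}$; the valuation bookkeeping above, while delicate, is then essentially forced.
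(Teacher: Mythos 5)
Your tower analysis of $K^{nr}\subseteq N'\subseteq M'$ reproduces the paper's own proof of the if-and-only-if almost exactly (the paper's $L'$ and $L$), but the first step --- the identification $\rho_\ell(I_K)\cong I_G=\Gal(M'/K^{nr})$ --- is genuinely unsupported as you set it up, and it is the one claim that no amount of field theory inside the tower can reach. Neither result you invoke delivers it: \cite[Theorem 10.3]{M2D2} is the cluster-theoretic \emph{criterion} for potentially good reduction and says nothing about the image of inertia on $T_\ell\Jac(X)$, while \cite[\S 8.2 Proposition 25]{Serre} is a statement about irreducible representations of abstract semidirect products $A\rtimes H$ --- pure finite group theory, used in the paper only later, for classifying the representations of $C_p\rtimes C_{2(p-1)}$. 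What is actually needed, and what the paper uses, is the refinement of the N\'eron--Ogg--Shafarevich criterion in \cite[\S 2, Corollary 3]{10.2307/1970722}, giving $\rho_\ell(I_K)\cong\Gal(K^{nr}(\Jac(X)[m])/K^{nr})$ for any $m\geq 3$ coprime to $p$, combined with \cite[Theorem 1.1]{Yelton}, which for $m=4$ identifies $K^{nr}(\Jac(X)[4])$ with precisely your $M'=K^{nr}(\{\sqrt{\alpha_i-\alpha_j}\})$. Without Yelton's theorem (or an equivalent computation of the $4$-torsion field of a hyperelliptic Jacobian) there is no reason the inertia image is cut out by this particular field; you flagged this step yourself as the main obstacle, and rightly so --- as proposed it is a gap, not merely a delicate point.

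A secondary slip: your unconditional bound $|\Gal(N'/K^{nr})|\leq p(p-1)$ assumes $\Gal(N'/K^{nr})$ is transitive on the roots, i.e.\ that $f$ is irreducible over $K^{nr}$, which is not among the hypotheses of the proposition (it fails, for instance, when $X$ already has good reduction); the paper's Lemma on this point quietly carries the same assumption. It is repairable exactly where you need it: in the maximality direction, your bound $[M':N']\leq 2$ forces $p\mid[N':K^{nr}]$, an element of order $p$ in $S_p$ is a $p$-cycle, transitivity follows and the normaliser bound applies; in the converse direction condition (1) gives the group outright. Modulo these repairs the rest of your argument is correct and parallels the paper's, with some pleasant variants: your units-are-squares argument for $[M':N']\leq 2$ replaces the paper's ``elementary abelian yet totally tamely ramified, hence cyclic'' argument; your sharp $2$-transitivity justification that all $v(\alpha_i-\alpha_j)$ are equal replaces the paper's appeal to the single-cluster picture (note yours is only available once (1) is known, but that is indeed the order in which you use it); the discriminant bookkeeping $\delta=v_K(\Delta)$ matches the paper's computation; and the Schur--Zassenhaus splitting plus the central-involution identification of the degree-$2$ Frobenius quotient supplies detail the paper leaves implicit.
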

	
	The first condition in this proposition is expensive to check computationally, however the following result gives two conditions that imply those above and are easier to verify. Throughout the rest of the paper we will assume for simplicity that these two new conditions hold, however they can be replaced by the general ones, in fact, the main result of this paper (Theorem \ref{maintheorem}) holds whenever the image of inertia is maximal, i.e. whenever assumptions (i) and (ii) of Proposition \ref{PROP:characterisation} are satisfied.
	
	\begin{prop}\label{assumptions}
		The conditions in Proposition \ref{PROP:characterisation} are implied by the following two:
		\begin{enumerate}
			\item $f$ is irreducible over $K$;
			\item $(v_K(\Delta),p-1)=1$.
		\end{enumerate}
	\end{prop}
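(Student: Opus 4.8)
The plan is to deduce conditions (i) and (ii) of Proposition \ref{PROP:characterisation} from (1) and (2), the implication (2) $\Rightarrow$ (ii) being immediate: since $p$ is odd, $p-1$ is even, so $(v_K(\Delta),p-1)=1$ forces $v_K(\Delta)$ to be odd. For (i), the first step is to pin down the wild part. By (1) the field $K(\alpha_1)$ has degree $p$ over $K$, so, $p$ being prime, $K(\alpha_1)/K$ is either unramified or totally ramified. In the unramified case $\alpha_1\in K^{nr}$ and, $K(\alpha_1)/K$ being then Galois, $f$ splits completely over $K^{nr}$; all differences $\alpha_i-\alpha_j$ lie in $K^{nr}$, whose value group is $\Z$, whence $v_K(\Delta)=\sum_{i\neq j}v_K(\alpha_i-\alpha_j)$ is even, contradicting (ii). Therefore $K(\alpha_1)/K$ is totally (wildly) ramified, and since $K^{nr}/K$ is unramified, $f$ stays irreducible over $K^{nr}$ and $M:=K^{nr}(\alpha_1)/K^{nr}$ is totally ramified of degree $p$. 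Consequently $\Gamma:=\Gal(L/K^{nr})$, with $L$ the splitting field of $f$ over $K^{nr}$, is a transitive subgroup of $S_p$. As $L/K^{nr}$ is totally ramified, $\Gamma$ equals its own inertia group $\Gamma_0$, its wild inertia $P=\Gamma_1$ is the unique (hence normal) Sylow $p$-subgroup, necessarily $C_p$, and the tame quotient $\Gamma/P$ is cyclic of some order $d\mid p-1$. A normal $C_p$ inside a transitive subgroup of $S_p$ forces $\Gamma\leq N_{S_p}(C_p)=C_p\rtimes C_{p-1}$, so $\Gamma\cong C_p\rtimes C_d$; it remains to prove $d=p-1$.

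The heart of the argument is to read off $d$ from $v_K(\Delta)$. Since $P$ has prime order, every $\Gamma_i$ with $i\geq 1$ is a subgroup of $P=C_p$, so the ramification filtration has a single break $b\geq1$, i.e. $\Gamma_i=C_p$ for $1\le i\le b$ and $\Gamma_{b+1}=1$. I would compute the discriminant exponent of $M/K^{nr}$ by the conductor--discriminant formula applied to the permutation representation $\Ind_{H}^{\Gamma}\mathbf 1$, where $H=C_d$ is the stabiliser of $\alpha_1$ (so $M=L^H$). Restricted to $P$ this is the regular representation of $C_p$, giving $\Ind_H^\Gamma\mathbf 1=\mathbf 1\oplus\bigoplus_{k=1}^{e}W_k$ with $e=(p-1)/d$, each $W_k$ an irreducible of dimension $d$ induced from a nontrivial character of $P$, the $e$ summands corresponding to the $e$ orbits of $C_d$ acting by multiplication on $\mathbb F_p^\times$. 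A direct evaluation of the Artin conductor exponent gives $\mathfrak f(W_k)=d+b$ for every $k$, whence $v_K(\mathrm{disc}(M/K^{nr}))=e(d+b)=(p-1)+eb$. Finally, passing from the field discriminant to $\Delta=\mathrm{disc}(f)$ introduces the index $[\OK_M:\OK_{K^{nr}}[\alpha_1]]$; writing $v_M(\alpha_1)=a$ (coprime to $p$) this index has length $(a-1)(p-1)/2$, so that $v_K(\Delta)=(p-1)+eb+(a-1)(p-1)\equiv eb\pmod{p-1}$.

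With this congruence the conclusion is quick: since $e=(p-1)/d$ divides $p-1$ and $p-1=ed$, we get $\gcd(v_K(\Delta),p-1)=\gcd(eb,p-1)=e\cdot\gcd(b,d)$. Hypothesis (2) makes this equal to $1$, which forces $e=1$, i.e. $d=p-1$. Hence $\Gamma\cong C_p\rtimes C_{p-1}$ is the full Frobenius group, establishing (i).

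I expect the main obstacle to be the discriminant computation of the middle paragraph: correctly identifying the single break $b$, decomposing $\Ind_H^\Gamma\mathbf 1$, evaluating the conductors $\mathfrak f(W_k)$, and controlling the (non-monogenic) index term so that it is a multiple of $p-1$ and hence invisible modulo $p-1$. Everything else—the parity argument for (ii), the reduction to the totally ramified case, and the embedding of $\Gamma$ into $C_p\rtimes C_{p-1}$—is comparatively routine.
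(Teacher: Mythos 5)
Your opening reductions are fine and your overall route is genuinely different from the paper's. The deduction of condition (ii), the dichotomy for $K(\alpha_1)/K$ with the parity argument ruling out the unramified case, the identification $\Gamma\cong C_p\rtimes C_d$ inside $N_{S_p}(C_p)$, the single break $b$ in the lower-numbering filtration of $\Gal(L/K^{nr})$, the Mackey decomposition $\Ind_H^\Gamma\mathbf{1}=\mathbf{1}\oplus\bigoplus_{k=1}^{e}W_k$ with multiplicity one, and the conductor values $\mathfrak{f}(W_k)=d+b$, hence $v_{K^{nr}}(\mathfrak{d}_{M/K^{nr}})=(p-1)+eb$, all check out. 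The paper avoids this machinery entirely: it shows the Sylow $p$-subgroup $H$ of $\Gal(M/K)$ is normal and lies in inertia, and gets $(p-1)\mid[L':K^{nr}]$ by adjoining $\sqrt[p-1]{\Delta}=(\alpha_2-\alpha_1)^p\sqrt[p-1]{u}$, where $u$ is a unit of $L'$ (so its $(p-1)$-th root is harmless over $K^{nr}$), then concludes from the embedding of $\Gal(L'/K^{nr})$ into $C_p\rtimes C_{p-1}$ of Lemma \ref{maxinertia}.

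However, the step you flagged is a genuine gap, and as stated it is false. First, nothing in the hypotheses makes $\alpha_1$ integral: $f(x)=x^p-1/p$ over $\Q_p$ is monic, irreducible, has $v(\Delta)=1$ coprime to $p-1$, and has potentially good reduction, yet its roots have negative valuation, so $\OK_{K^{nr}}[\alpha_1]$ and your index do not exist (this is repairable by rescaling $x$, which shifts $v_K(\Delta)$ by a multiple of $p(p-1)$, but you must say so). Second, even for integral roots, the parenthetical ``$a=v_M(\alpha_1)$ coprime to $p$'' is unjustified and can fail, and with it your formula: for $f(x)=(x-1)^p-p$, which satisfies (1) and (2) with $v(\Delta)=2p-1$, one has $a=0$ while the index length is $0$, not $-(p-1)/2$. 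What your argument actually needs is only $2\cdot\mathrm{length}\equiv 0\pmod{p-1}$, and the missing input that delivers it is the cluster-picture fact (Remark \ref{cluster}): potentially good reduction forces all differences $\alpha_i-\alpha_j$ to have one common valuation $\delta$. But once you invoke that fact, the whole middle paragraph collapses to two lines: $v_K(\Delta)=p(p-1)\delta$, and since $v_L(\alpha_i-\alpha_j)=pd\,\delta\in\Z$ (with $L$ the splitting field over $K^{nr}$, totally ramified of degree $pd$), we get $v_K(\Delta)=e\cdot(pd\,\delta)$ with $e=(p-1)/d$, so $e\mid\gcd(v_K(\Delta),p-1)=1$, i.e. $d=p-1$ — no conductor--discriminant formula, breaks, or index needed. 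This equal-valuation fact is also exactly what powers the paper's $\sqrt[p-1]{\Delta}$ argument, so your proposal is salvageable but currently omits the one ingredient that both closes its gap and makes most of its computation unnecessary.
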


	For the proof of these statements see Section \ref{inertia}.
	
	Let $F=K(\alpha_1,\dots,\alpha_p,\sqrt{\alpha_2-\alpha_1})$. We will prove that if the conditions in Proposition \ref{assumptions} hold, $F/K$ is totally ramified and $X$ acquires good reduction over $F$. Moreover if the inertia degree $f_{K/\Q_p}$ of $K$ over $\Q_p$ is even, then $F=K(\{\sqrt{\alpha_i-\alpha_j}\}_{i \neq j})$ and so $G=I_G$, otherwise $G$ is isomorphic to a semidirect product of the form $I_G \rtimes C_2$. Let us now fix a numbering on the roots and a $p$-cycle $\sigma \in I_G$ (which exists since $p \mid |I_G|$), such that:
	\[
	\sigma: \alpha_1 \mapsto \alpha_2 \mapsto \dots \alpha_p \mapsto \alpha_1
	\]
	and $\sigma(\sqrt{\alpha_i-\alpha_j})=\sqrt{\sigma(\alpha_i)-\sigma(\alpha_j)} $ for all $i,j$. Moreover, for odd $f_{K/\Q_p}$, let $\phi$ be the non-trivial element of $G$ that fixes the field $F$. For each prime $\ell \neq p$ we fix an embedding $\overline \Q_\ell \rightarrow \C$. In particular we identify $\sqrt{p}$ with the positive real square root of $p$, and $\sqrt{-p}$ with the complex number $i \sqrt{p}$.
	
	Our main result is the following.
	\begin{theorem} \label{maintheorem}
		Let $X/K$ be a hyperelliptic curve over a $p$-adic field $K$ defined by an equation $(*)$, with potentially good reduction. Let $f_{K/\Q_p}$ be the inertia degree of $K/\Q_p$. Let $\rho_\ell$ be the $\ell$-adic Galois representation attached to $\Jac(X)$, for $\ell \neq p$. Suppose that $f$ is irreducible over $K$ and that the valuation of the discriminant of $f$ is coprime to $p-1$. Let $G,I_G,\sigma,\phi$ be as above.
		
		Then $\rho_\ell$ is irreducible and factors as $\rho_\ell = \chi \otimes \psi$, where:
		\begin{eqnarray*}
			\chi: &G_K \rightarrow \overline{\Q}_\ell^\times\\
			&I_K \mapsto 1\\
			&\Frob_K \mapsto \sqrt{\lagrange{-1}{p} p}^{f_{K/\Q_p}}
		\end{eqnarray*}
		and:
		\begin{enumerate}
			\item if $f_{K/\Q_p}$ is even, $\psi$ is the unique irreducible faithful representation of $G=I_G$ of dimension $p-1$;
			\item if $f_{K/\Q_p}$ is odd, $\psi$ is the unique irreducible faithful representation of $G \cong I_G \rtimes C_2$ of dimension $p-1$ such that $\tr (\psi(\sigma \phi))=-\sqrt{\lagrange{-1}{p}p}$.
		\end{enumerate}
	\end{theorem}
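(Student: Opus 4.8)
The plan is to determine $\rho_\ell$ by first pinning down its restriction to inertia and then extending over Frobenius, handling the global structure via Schur's lemma. First I would invoke the explicit description of the inertia action from Section~\ref{inertia}, which realises $\rho_\ell|_{I_K}$ as a representation of $\rho_\ell(I_K)\cong I_G$ in which the wild inertia $C_p\trianglelefteq I_G$ acts on $V=T_\ell\Jac(X)\otimes\overline\Q_\ell$ with each of its $p-1$ nontrivial characters occurring exactly once, the trivial character being absent since good reduction is attained only over the wildly ramified extension $F$. Because the tame quotient $C_{p-1}$ permutes these characters simply transitively, the Mackey/Clifford irreducibility criterion forces $\rho_\ell|_{I_K}$ to be irreducible; being faithful of dimension $p-1$, it must coincide with the unique such irreducible representation $\psi$ of $I_G$. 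In particular $\rho_\ell$ itself is irreducible.

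Next I would upgrade this to the claimed tensor decomposition. Since $I_K\trianglelefteq G_K$ and $F'=K(\{\sqrt{\alpha_i-\alpha_j}\})$ is Galois over $K$ with group $G$, the composite $\tilde\psi$ of $G_K\twoheadrightarrow G$ with a faithful irreducible $(p-1)$-dimensional representation $\psi$ of $G$ is a representation of $G_K$ whose restriction to $I_K$ is again $\psi$. The space $W=\mathrm{Hom}_{I_K}(\tilde\psi,\rho_\ell)$ is then one-dimensional by Schur's lemma, carries a $G_K$-action because $I_K$ is normal, and hence defines a character $\chi\colon G_K\to\overline\Q_\ell^\times$; the evaluation map yields a $G_K$-isomorphism $\rho_\ell\cong\chi\otimes\tilde\psi$. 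As $\tilde\psi$ and $\rho_\ell$ agree on $I_K$, the character $\chi$ is trivial on $I_K$, i.e.\ unramified. When $f_{K/\Q_p}$ is even we have $G=I_G$, the representation $\psi$ is canonical, and this already establishes statement (i) up to the value of $\chi(\Frob_K)$.

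It remains to compute $\chi(\Frob_K)$ and, when $f_{K/\Q_p}$ is odd, to single out the correct $\psi$. For the scalar I would feed in the Frobenius computation of Section~\ref{frobenius}: passing to a good model and reducing, one identifies the Frobenius action on $H^1$ of the reduced curve in terms of quadratic Gauss sums, which under the fixed embedding $\overline\Q_\ell\to\C$ and the convention $\sqrt{-p}=i\sqrt p$ evaluate to $\sqrt{\lagrange{-1}{p}p}$; raising to the power $f_{K/\Q_p}$, so that $|k|=p^{f_{K/\Q_p}}$, gives the stated value of $\chi(\Frob_K)$ and completes the even case.

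In the odd case $G\cong I_G\rtimes C_2$ the decomposition is not unique: the two faithful $(p-1)$-dimensional irreducibles of $G$ differ by the nontrivial character $\epsilon$ of $G/I_G$, whose inflation to $G_K$ is unramified, so one may trade $\tilde\psi\leftrightarrow\tilde\psi\otimes\epsilon$ at the cost of replacing $\chi$ by $\chi\epsilon$. Fixing $\chi(\Frob_K)$ as above removes this ambiguity and selects one extension. To identify it I would compute $\tr\psi(\sigma\phi)=\chi(\gamma)^{-1}\tr\rho_\ell(\gamma)$ for some $\gamma\in G_K$ lifting $\sigma\phi$, where $\phi$ supplies the unramified Frobenius part and $\sigma$ the wild inertia: combining the explicit action of $\sigma$ from Section~\ref{inertia} with the Frobenius eigenvalues from Section~\ref{frobenius} gives $\tr\rho_\ell(\gamma)$, hence $\tr\psi(\sigma\phi)=-\sqrt{\lagrange{-1}{p}p}$, which is statement (ii). The main obstacle is precisely this last step: tracking the square-root and Legendre-symbol signs consistently through the reduction and the chosen embedding, so that the computed trace matches the asserted value and not its negative. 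This is the delicate bookkeeping that the even case, where knowing $\chi(\Frob_K)$ alone suffices, avoids.
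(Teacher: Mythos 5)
Your first two steps are essentially sound and track the paper's architecture: your Clifford-theoretic argument for irreducibility of $\rho_\ell\big|_{I_K}$ (each nontrivial character of wild inertia occurring once, permuted transitively by the tame quotient) is a valid variant of the paper's route, which instead classifies the irreducible representations of $I_G\cong C_p\rtimes C_{2(p-1)}$ via \cite[\S 8.2 Proposition 25]{Serre} and rules out reducibility because all proper constituents would be $1$-dimensional, forcing an abelian image of the faithful nonabelian $I_G$-action. Your Hom-space construction of $\chi$ via $W=\mathrm{Hom}_{I_K}(\tilde\psi,\rho_\ell)$ is a clean repackaging of the paper's direct definition $\psi=\rho_\ell\otimes\chi^{-1}$. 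One caution in the even case: the scalar $\rho_\ell(\Frob_F)=\lambda\,\id$ has a sign ambiguity $\lambda=\pm p^{f_{K/\Q_p}/2}$ which the paper resolves not by a Gauss-sum identification but by integrality of the characteristic polynomial together with the congruence $\lambda\equiv 1\pmod 4$, coming from $F=K(\Jac(X)[4])$; your appeal to ``the Frobenius computation of Section \ref{frobenius}'' is acceptable only because that section contains exactly this argument.

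The genuine gap is in the odd case, at the decisive step. You propose to compute $\tr\rho_\ell(\gamma)$ for $\gamma$ lifting $\sigma\phi$ by ``combining the explicit action of $\sigma$ from Section \ref{inertia} with the Frobenius eigenvalues from Section \ref{frobenius}.'' This cannot work even in principle: the two candidates $\psi_1$ and $\psi_2=\psi_1\otimes\epsilon$ agree on $I_G$ and have identical Frobenius eigenvalue multisets ($g$ copies each of $\pm 1$ for $\psi(\Frob_K)$, a multiset invariant under twisting by the quadratic unramified character), so no combination of inertia data and Frobenius eigenvalues distinguishes them --- the trace of a product is not determined by the separate conjugacy data of the factors. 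The paper must therefore make a genuinely new computation (Section \ref{sub:proof}): it evaluates the action of $\sigma\Frob_K$ on the reduced curve via the composite $\red\circ\beta\circ\sigma\Frob_K\circ\beta^{-1}\circ\lift$ on the good model $y^2=x^p-x$, applies the trace formula of \cite{1809.10208} to reduce to counting solutions of the system $x=x^{|k|}+1$, $y=y^{|k|}$, $y^2=x^p-x$, finds $0$ or $2p$ affine solutions according as $p\equiv 3$ or $1\pmod 4$, and concludes $\tr\rho_\ell(\sigma\Frob_K)=-\lagrange{-1}{p}p$ for $n=1$; general odd $n$ is then handled by transporting the computation to the explicit curve $y^2=x^p-p$ over the unramified degree-$n$ extension, where $\rho_\ell'(\Frob_{\Q_p})^{n-1}$ is scalar. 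What you dismissed as ``delicate bookkeeping'' is in fact the central missing idea of the proof, not a sign-chase; in addition, the Gauss-sum evaluation $\tr\psi_j(\sigma\phi)=\pm\sqrt{\lagrange{-1}{p}p}$ of the two candidate traces (Lemma \ref{pip}) is asserted in your sketch but never supplied, and it too requires an explicit induced-character computation.
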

	
	The strategy of the proof is as follows: first we determine the Galois representation restricted to inertia, then we find a model of $X \times_K F$ reducing to $y^2=x^p-x$ over the residue field, and use this to determine the action of $\rho_\ell(\Frob_K)$. If the inertia degree $f_{K/\Q_p}$ is even, then this information is enough to determine the Galois representation $\rho_\ell$, otherwise there are two representations that, when restricted to inertia, give the same result, and the two only differ by the trace of the elements that are products of Frobenius with a wild inertia automorphism. We will compute explicitly the trace of one such element, namely $\sigma \Frob_K$ where $\sigma$ is defined above, using again the good model $y^2=x^p-x$, to conclude.

	\begin{eg}\label{example}
		Let $X/\Q_p$ be the curve defined by $y^2 = f(x)= x^p - p$. Then $f$ is irreducible over $\Q_p$ and $v_K(\Delta)= 2p-1$ is relatively prime to $p-1$. Therefore, $X$ satisfies the assumptions of Theorem \ref{maintheorem}.
	\end{eg}
	
	\section{The inertia action} \label{inertia}
	
	In this section we prove Propositions \ref{PROP:characterisation}, \ref{assumptions} and we use Proposition 25 in \cite[\S 8.2]{Serre} to determine the restriction to inertia of $\rho_\ell$.
	
	\subsection{Proof of Propositions \ref{PROP:characterisation} and \ref{assumptions}}\label{sec:proofprops}
	\begin{rmk}[Cluster picture for the curve]\label{cluster}
		
		Recall that for a hyperelliptic curve of the form $y^2=f(x)$, a cluster is a subset of the set of all roots of $f$ in $\overline K$ with the property that the difference of any two different elements of it has valuation $\geq \delta$, for some $\delta \in \R$. For more detailed definitions, see \cite[\S 1]{M2D2}.
		
		Suppose that $X$ is a hyperelliptic curve defined by an equation $(*)$, and that $f(x)$ has roots $\alpha_1,\dots,\alpha_p \in \overline K$; note in particular that, if $g$ is the genus of the curve, then $p=2g+1$. By \cite[Theorem 10.3]{M2D2}, we have that $X$ has potentially good reduction if and only if the cluster picture consists of a unique cluster $R$ of size $p$ containing all the roots. In this case, $\Jac(X)$ also has potentially good reduction.
	\end{rmk}
	
	By the Criterion of N\'eron-Ogg-Shafarevich (see \cite[\S 2 Theorem 2(ii)]{10.2307/1970722}), if $\ell \neq p$ the Galois representation $\rho_\ell$ on $T_\ell \Jac(X)$, restricted to inertia, has finite image, independent of $\ell$. Moreover by Corollary 3 in the same paper, this image is isomorphic to $\Gal(K^{nr}(\Jac(X)[m])/K^{nr})$ for any $m \geq 3$ coprime to $p$, and $K^{nr}(\Jac(X)[m])$ is the minimal extension of $K^{nr}$ over which $\Jac(X)$ acquires good reduction. We can fix $m=4$; then by \cite[Theorem 1.1]{Yelton}, we have that
	\[
	K^{nr}(\Jac(X)[4]) = K^{nr} (\{ \sqrt{\alpha_i - \alpha_j }\}_{i,j \in \{1,\dots,p\}}).
	\]
	
	We denote by $C_p \rtimes C_{2(p-1)}$ the semidirect product of $C_p$ and $C_{2(p-1)}$ which has the following presentation:
	\[
	\langle \sigma, \tau | \sigma^p=\tau^{2(p-1)}=1, \tau \sigma \tau^{-1}=\sigma^b \rangle,
	\]
	for some $b$ coprime to $p$. The exact value of $b$ will not be relevant for the rest of the paper. This group has a degree $2$ quotient isomorphic to the Frobenius group $C_p \rtimes C_{p-1}$, i.e. the group of affine transformations on the finite field with $p$ elements, given by the non-abelian semidirect product $C_p \rtimes \Aut(C_p)$. The extra $C_2$ contained in $C_p \rtimes C_{2(p-1)}$ is generated by $\tau^{p-1}$. We will recall and use this notation in \S \ref{sec:irrepsIG}.

	\begin{lemma}\label{maxinertia}
		With notations above, we have isomorphisms
		\[
		\rho_\ell(I_K) \cong \Gal(K^{nr}(\{\sqrt{\alpha_i-\alpha_j}\}_{i,j})/K^{nr}) \cong I_G
		\]
		and this group is isomorphic to a subgroup of $C_p \rtimes C_{2(p-1)}$.
		
	\end{lemma}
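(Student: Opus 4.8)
The plan is to establish the two isomorphisms, which are essentially bookkeeping, and then to concentrate the real work on the structural embedding of $I_G$. For the chain $\rho_\ell(I_K) \cong \Gal(K^{nr}(\{\sqrt{\alpha_i-\alpha_j}\})/K^{nr})$ I would simply assemble the ingredients already collected before the statement: by the Criterion of N\'eron--Ogg--Shafarevich (Corollary 3 of the cited reference) applied with $m=4$, we have $\rho_\ell(I_K)\cong \Gal(K^{nr}(\Jac(X)[4])/K^{nr})$, and by Yelton's theorem $K^{nr}(\Jac(X)[4]) = K^{nr}(\{\sqrt{\alpha_i-\alpha_j}\})$. For the identification with $I_G$, write $L = K(\{\sqrt{\alpha_i-\alpha_j}\}_{i\neq j})$, so that $G=\Gal(L/K)$ and, by definition, $I_G=\Gal(L/(L\cap K^{nr}))$ is the subgroup fixing the maximal unramified subextension. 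I would then invoke the standard isomorphism for the compositum of $L$ with the unramified extension $K^{nr}$: restriction gives $\Gal(LK^{nr}/K^{nr}) \xrightarrow{\sim} \Gal(L/(L\cap K^{nr})) = I_G$, and since $LK^{nr}=K^{nr}(\{\sqrt{\alpha_i-\alpha_j}\})$ this proves both isomorphisms simultaneously.

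For the embedding I would work entirely over $K^{nr}$, whose residue field is algebraically closed, so that every finite subextension is totally ramified and $I_G=\Gal(LK^{nr}/K^{nr})$ is the full (wild and tame) inertia of a totally ramified extension. The action on the roots yields an exact sequence $1 \to H \to I_G \to \bar G \to 1$, where $\bar G=\Gal(K^{nr}(\alpha_1,\dots,\alpha_p)/K^{nr})\le S_p$ is the Galois group of the splitting field of $f$ over $K^{nr}$, and $H=\Gal(LK^{nr}/K^{nr}(\alpha_1,\dots,\alpha_p))$ is generated by the sign changes $\sqrt{\alpha_i-\alpha_j}\mapsto\pm\sqrt{\alpha_i-\alpha_j}$, hence is an elementary abelian $2$-group (a subgroup of $(\Z/2)^{\binom{p}{2}}$). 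Let $P\le I_G$ be the wild inertia, i.e. its normal Sylow $p$-subgroup. Since the Sylow $p$-subgroup of $S_p$ has order $p$, the image of $P$ in $\bar G$ is either trivial or generated by a $p$-cycle; in the wild case relevant here $P\cong C_p$ maps to a $p$-cycle. As $P$ is normal in $\bar G$, we obtain $\bar G\le N_{S_p}(P)\cong C_p\rtimes C_{p-1}$, the normalizer of a $p$-cycle, so that $|\bar G|$ divides $p(p-1)$ and $\bar G/P$ is cyclic.

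The crux is bounding $H$. Here I would use that the tame quotient $I_G/P$ of the inertia group of a local field is cyclic. Since $H\cap P=1$ (one is a $2$-group, the other a $p$-group), the composite $H\hookrightarrow I_G\to I_G/P$ is injective, so $H$ embeds into a cyclic group and is therefore cyclic; being simultaneously elementary abelian of exponent $2$, this forces $|H|\le 2$. Consequently $|I_G|=|H|\,|\bar G|$ divides $2p(p-1)$, the Sylow $p$-subgroup of $I_G$ is $P\cong C_p$, and $I_G/P$ is cyclic of order dividing $2(p-1)$. By Schur--Zassenhaus the sequence $1\to P\to I_G\to I_G/P\to 1$ splits, giving $I_G\cong C_p\rtimes C_m$ with $m\mid 2(p-1)$, which embeds into $C_p\rtimes C_{2(p-1)}$.

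The step I expect to be the main obstacle is this final embedding. Establishing $|H|\le 2$ is the key structural insight, but to upgrade the abstract semidirect decomposition $C_p\rtimes C_m$ to an honest subgroup of $C_p\rtimes C_{2(p-1)}$ one must check that the action of the tame quotient $C_m$ on $P$ --- which, as one verifies, factors through $\bar G$ because $H$ centralizes $P$ --- is compatible with the action defining the target group. I would handle this by analysing the conjugation of the tame generator on the $p$-cycle $P$ and matching it against the cyclic action on $C_{2(p-1)}$; tracking how the factor contributed by $H$ attaches to the tame quotient is precisely the point where the ramification computation, and hence the parity of $v_K(\Delta)$, must be brought in.
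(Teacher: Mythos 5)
Your proposal follows essentially the same route as the paper's proof. The two isomorphisms are assembled exactly as in the text (N\'eron--Ogg--Shafarevich with $m=4$ plus Yelton's theorem, then the restriction isomorphism $\Gal(LK^{nr}/K^{nr})\cong \Gal(L/(L\cap K^{nr}))$ for the compositum), and the structural bound rests on the same skeleton: the exact sequence $1\to H\to I_G\to \overline{G}\to 1$ with $\overline{G}$ the Galois group of the splitting field $L'$ of $f$ over $K^{nr}$, the normalizer computation $N_{S_p}(C_p)\cong C_p\rtimes C_{p-1}$, and cyclicity of tame quotients to force $|H|\le 2$. Your variant of this last step (inject $H$ into the cyclic tame quotient $I_G/P$ using $H\cap P=1$, noting $[H,P]\subseteq H\cap P=1$) is an equivalent repackaging of the paper's observation that $L/L'$ is totally ramified and tame, hence cyclic, of exponent $2$. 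One caveat you share with the paper: the analysis of the wild part really presupposes that $f$ is irreducible over $K^{nr}$ (the paper's final sentence phrases its conclusion conditionally on this), and you acknowledge the same restriction by working ``in the wild case relevant here.''

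Where you diverge is the final paragraph, and there your diagnosis of the obstacle is sharper than your prescription. Since $b$ is a primitive root, $\Aut(C_p)$ is cyclic and every action is a power of $b$, so the compatibility you worry about is purely numerical: a subgroup of $C_p\rtimes C_{2(p-1)}$ containing $\sigma$ has tame part of order $m=2(p-1)/\gcd(k,2(p-1))$ acting with order $d=(p-1)/\gcd(k,p-1)$, and an elementary $2$-adic gcd check shows the realizable pairs are exactly $m=2d$ (any $d\mid p-1$) and $m=d$ with $d$ \emph{odd}. Hence Schur--Zassenhaus plus ``$m\mid 2(p-1)$'' does not by itself give the embedding: the problematic configuration is $H=1$ with tame action of even order --- for instance $F_{p(p-1)}=C_p\rtimes C_{p-1}$ with faithful action, or $D_p$, neither of which is a subgroup of $C_p\rtimes C_{2(p-1)}$. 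Ruling this out requires knowing the parity of $v_{L'}(\alpha_2-\alpha_1)$, which in the full-Frobenius case equals $v_K(\Delta)$; so your instinct that a ramification/parity computation enters at exactly this point is sound. But you cannot ``bring in the parity of $v_K(\Delta)$'' as a hypothesis: Lemma \ref{maxinertia} assumes nothing about $\Delta$, and Proposition \ref{PROP:characterisation} --- where oddness of $v_K(\Delta)$ first appears --- is itself proved \emph{using} this lemma, so assuming it here would be circular; at best it could enter as a constraint you compute, not one you impose. Note that the paper's own proof stops at precisely the structural statement you reach and asserts the subgroup conclusion without performing the action-matching check, so your write-up is correct and faithful to the paper up to that point; the deferred final step, however, is left as a plan rather than a proof, and the specific tool you name for completing it is not available within this lemma.
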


	\begin{proof}
		Recall that $I_G$ is the inertia subgroup of the Galois group of $K(\{\sqrt{\alpha_i-\alpha_j}\}_{i,j})/K$, so we have that $K(\{\sqrt{\alpha_i-\alpha_j}\}_{i,j})K^{nr}/K^{nr}$ is also Galois with Galois group isomophic to $I_G$.
		
		Let $L=K^{nr}(\{\sqrt{\alpha_i-\alpha_j}\}_{i,j})$ and $L' = K^{nr}(\Jac(X)[2])$, and consider the tower of field extensions $L/L'/K^{nr}$. Notice that $L'$ is the splitting field of $f$ over $K^{nr}$, by \cite[Lemma 2.1]{Corn}.
		
		For all $i$, $\alpha_i \in L'$, therefore $L$ is obtained from $L'$ by adjoining square roots of some elements of $L'$. So the Galois group of $L/L'$ is a direct product of some copies of $C_2$. However, it is a totally ramified extension since $L' \supseteq K^{nr}$, and it is tame since $p$ is odd, therefore it must be cyclic. So $L/L'$ can only be trivial or quadratic.
		
		Now let us consider $L'/K^{nr}$. Since $f$ is irreducible over $K^{nr}$, we have that $\Gal(L'/K^{nr})$ has a cyclic subgroup of order $p$. Therefore $\Gal(L'/K^{nr})$ injects into $S_p$, the group of permutations on $p$ elements, and since $p$ divides $|S_p|$ exactly once, necessarily the $p$-Sylow subgroup of $\Gal(L'/K^{nr})$ is isomorphic to $C_p$. So, the wild inertia subgroup of $\Gal(L'/K^{nr})$ is isomorphic to $C_p$, and the quotient by $C_p$ is the Galois group of the maximal tamely ramified subextension of $L'/K^{nr}$, so it is cyclic. Now the image of it in $S_p$ is contained in the normaliser of $C_p$, that is equal to $C_p \rtimes C_{p-1}$. Therefore $\Gal(L'/K^{nr})$ injects into $C_p \rtimes C_{p-1}$.
		
		Putting all this together, if $f$ is irreducible over $K^{nr}$ then $\Gal(L/K^{nr})$ has at most order $2 \cdot p(p-1)$, with wild inertia subgroup of order $p$ and a cyclic quotient of order at most $2(p-1)$, corresponding to the maximal tame subextension. Since $L'/K^{nr}$ is an intermediate subextension with Galois group isomorphic to a subgroup of the Frobenius group $C_p \rtimes C_{p-1}$, this concludes the proof.
	\end{proof}
	
	This lemma shows that, for a curve defined as in $(*)$ acquiring good reduction over a wildly ramified extension, the size of the image of inertia under $\rho_\ell$ is at most $2p(p-1)$. In fact equality can be achieved, and in this case we say that the curve has maximal inertia image. We now complete the proof of Proposition \ref{PROP:characterisation}.
	
	\begin{proof}[Proof of Proposition \ref{PROP:characterisation}.]
		In Lemma \ref{maxinertia}, we proved that $|\rho_\ell(I_K)|$ divides $2p(p-1)$, and that (with the same notation used in the proof) $[L:L'] \leq 2$ and $[L':K^{nr}] \leq p(p-1)$. Clearly, the second inequality is an equality precisely when the Galois group of the splitting field of $f$ over $K^{nr}$ is $C_p \rtimes C_{p-1}$. Moreover, we have:
		\[
		L=L'(\sqrt{\alpha_2-\alpha_1})=K(\alpha_1,\dots,\alpha_p,\sqrt{\alpha_2-\alpha_1}),
		\]
		in fact at most one of the elements $\sqrt{\alpha_i-\alpha_j}$ is sufficient to generate $L$ over $L'$ and by Remark \ref{cluster} any of these elements works as they all have the same valuation. More precisely, the extension $L/L'$ is quadratic if and only if $\alpha_2-\alpha_1$ is not a square in $L'$, or equivalently it has odd valuation. We denote by $v_L$, $v_{L'}$ the normalised valuations on $L$ and $L'$ respectively. Then $v_L(\sqrt{\alpha_2-\alpha_1})=\dfrac{1}{2}[L:L']v_{L'}(\alpha_2-\alpha_1)$ and since by definition $\Delta = \prod_{i > j} (\alpha_i - \alpha_j)^2$, we have:
		\[
		v_L(\Delta) = \binom{p}{2} 2 v_L(\alpha_2-\alpha_1) = \binom{p}{2} 4 v_L(\sqrt{\alpha_2-\alpha_1}) =  2p(p-1) v_L(\sqrt{\alpha_2-\alpha_1});
		\]
		on the other hand since the valuations on $K^{nr}$ and $K$ agree on the elements of $K$ we have $v_L(\Delta)=[L:K^{nr}]v_K(\Delta)$.
		
		Suppose that $|\rho_{\ell}(I_K)|=2p(p-1)$, so $[L:K^{nr}]=2p(p-1)$. In particular, $[L:L']=2$ and by the observation above this means $v_{L'}(\alpha_2-\alpha_1)$ is odd. Then simplifying from the equalities above we obtain that $v_K(\Delta)=v_L(\sqrt{\alpha_2-\alpha_1}) = v_{L'}(\alpha_2-\alpha_1)$ is odd and, as we already noted, $\Gal(L'/K^{nr}) \cong C_p \rtimes C_{p-1}$. Conversely suppose that $\Gal(L'/K^{nr}) \cong C_p \rtimes C_{p-1}$ and that $v_K(\Delta)$ is odd. Then comparing the two expressions for $v_L(\Delta)$ and using that $[L':K^{nr}]=p(p-1)$ we obtain
		\[
		[L:L'] v_K(\Delta) = 2 \cdot \dfrac{1}{2}[L:L']v_{L'}(\alpha_2-\alpha_1), 
		\]
		so $v_{L'}(\alpha_2-\alpha_1)=v_K(\Delta)$ is odd, which implies $[L:L']=2$ and therefore $[L:K^{nr}]=2p(p-1)$.
		%
		%
	\end{proof}
	
	To conclude this subsection, we prove Proposition \ref{assumptions}.
	
	\begin{proof}[Proof of Proposition \ref{assumptions}.]
		Since $p−1$ is even, it follows from assumption (ii) that $v_K(\Delta)$ is odd. So, in order to prove Proposition \ref{assumptions} it suffices to prove that, if $f$ is irreducible over $K$ and $(v_K(\Delta),p-1)=1$ then $\Gal(L'/K^{nr}) \cong C_p \rtimes C_{p-1}$, where $L'$ is as in the proofs of Lemma \ref{maxinertia} and Proposition \ref{PROP:characterisation}.
		
		Let us denote by $M$ the splitting field of $f$ over $K$. First of all, since $f$ is irreducible over $K$, then $\Gal(M/K)$ contains a subgroup $H$ of order $p$. We observe that $H$ is a $p$-Sylow subgroup of $\Gal(M/K)$, since this group has a natural injection into $S_p$, whose $p$-Sylow subgroups have order $p$. Therefore $p$ divides exactly $[M:K]$. Let $U$ be the maximal unramified extension of $K$ contained in $M$. If $p \mid [M:U]$ then $H$ is the first ramification group of $\Gal(M/K)$ (see \cite[Chapter IV \S 1]{GreenbergSerre}). Otherwise, let $\alpha_1$ be any root of $f$ in $M$ and consider $K'=K(\alpha_1)$, which has (prime) degree $p$ over $K$. Since $p \nmid [M:U]$, we have $K \subseteq K' \subseteq U$. Now, $U/K$ is cyclic, therefore every subextension of it is Galois. In particular, $K'/K$ is Galois, thus $K'$ coincides with the splitting field of $f$, hence $K'=U=M$. In both cases, $H$ is normal in $\Gal(M/K)$.

		
		Now consider the element $\sqrt[p-1]{\Delta}$. Using the expression of $\Delta$ in terms of the roots $\alpha_1,\dots,\alpha_p$ of $f$ and the fact that $\alpha_i-\alpha_j$ all have the same valuation, we can prove that $\sqrt[p-1]{\Delta} \in L'$. More precisely, we have
		\[
		\sqrt[p-1]{\Delta} = (\alpha_2-\alpha_1)^p \sqrt[p-1]{u}, \quad \text{where } u=\prod_{i>j} \left(\dfrac{\alpha_i-\alpha_j}{\alpha_2-\alpha_1}\right)^2;
		\]
		note that $u$ is an element of $L'$ with valuation $0$ and so its $(p-1)$-th root gives an unramified hence trivial extension of $L'$. Since $(v_K(\Delta),p-1)=1$, we also have that $[K^{nr}(\sqrt[p-1]{\Delta}):K^{nr}]=p-1$. Therefore $p-1$ divides both $[L':K^{nr}]$ and $[M:K]$. Now the inertia subgroup of $\Gal(M/K)$ is isomorphic to $\Gal(L'/K^{nr})$, and it is normal with cyclic quotient. Therefore it must contain the subgroup $H$ of $\Gal(M/K)$ isomorphic to $C_p$. This proves that $p$ and $p-1$ divide $[L':K^{nr}]$ and as in the proof of Lemma \ref{maxinertia} we conclude that $\Gal(L'/K^{nr}) \cong C_p \rtimes C_{p-1}$.
	\end{proof}
	
	\subsection{The irreducible representations of the group $C_p \rtimes C_{2(p-1)}$}\label{sec:irrepsIG}
	
	We now want to describe the representation induced from $\rho_\ell$ on $I_G$.
	In order to do it, we make a digression on the irreducible representations of the group $C_p \rtimes C_{2(p-1)}$.
	
	Let $\sigma,\tau \in C_p \rtimes C_{2(p-1)}$ be as in the presentation given in \S \ref{sec:proofprops}, and denote by $\nu$ the element $\tau^{p-1}$, that generates the extra $C_2$ contained in $C_p \rtimes C_{2(p-1)}$.
	Note that $\nu$ is the only element of the subgroup $C_{2(p-1)}$ of $C_p \rtimes C_{2(p-1)}$ (except the identity) that commutes with $\sigma$. The group $C_p \rtimes C_{2(p-1)}$ is the semidirect product of two abelian groups, $A= C_p$ and $H=C_{2(p-1)}$, so we are in the setting of \cite[\S 8.2]{Serre}. Consider a set of representatives for the orbits of $H$ in the group of characters of $A$. This set consists of two elements only, namely the trivial representation $\mathbf{1}$ and a non-trivial character $\eta$. Let $H_{\mathbf{1}}$ (resp. $H_\eta$) denote the subgroup of $H$ consisting of the elements that stabilise $\mathbf{1}$ (resp. $\eta$). Then $H_{\mathbf{1}} = H$ and $H_\eta=\langle \nu \rangle \cong C_2$. Now for any irreducible representation $\xi$ of $H_{\bullet}$ we obtain a representation of $G$ given by $\Ind_{A H_{\bullet}}^{C_p\rtimes C_{2(p-1)}} \bullet \otimes \xi$. By \cite[\S 8.2 Proposition 25]{Serre} the representations obtained in this way are exactly all the irreducible representations of $C_p \rtimes C_{2(p-1)}$. This proves the following lemma.
	
	\begin{lemma}
		The group $C_p \rtimes C_{2(p-1)}$ has $2(p-1)$ irreducible representations of dimension $1$, corresponding to the $2(p-1)$ irreducible representations of $H_{\mathbf{1}}=C_{2(p-1)}$, and two representations of dimension $p-1$ corresponding to the two irreducible representations of $H_{\eta} \cong C_2$.
	\end{lemma}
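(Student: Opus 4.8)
The plan is to invoke Serre's Proposition~25 in \cite[\S 8.2]{Serre} essentially verbatim, since the preceding discussion has already arranged the group $C_p \rtimes C_{2(p-1)} = A \rtimes H$ (with $A = C_p$ abelian and $H = C_{2(p-1)}$) into exactly the shape required by that result. Recall that the proposition asserts that the irreducible representations of such a semidirect product are precisely the representations $\Ind_{A H_i}^{C_p \rtimes C_{2(p-1)}}(\tilde\chi_i \otimes \tilde\rho)$, where $\chi_i$ runs over a set of representatives of the orbits of $H$ on the character group $\hat A$, where $H_i$ is the stabiliser of $\chi_i$ in $H$, where $\tilde\chi_i$ is the extension of $\chi_i$ to $A H_i$ that is trivial on $H_i$, and where $\rho$ runs over the irreducible representations of $H_i$ (inflated along $A H_i \to H_i$). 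These representations are irreducible, pairwise non-isomorphic, and exhaustive, and the dimension of $\Ind_{A H_i}^{C_p \rtimes C_{2(p-1)}}(\tilde\chi_i \otimes \tilde\rho)$ equals $[C_p \rtimes C_{2(p-1)} : A H_i]\cdot \dim\rho = [H : H_i]\cdot\dim\rho$. So the entire content of the lemma reduces to determining the orbits of $H$ on $\hat A$ together with their stabilisers, and then reading off dimensions.

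The key computation is therefore the action of $H = \langle \tau \rangle$ on $\hat A \cong \hat{C_p}$. Using that $\nu = \tau^{p-1}$ is the unique non-identity element of $H$ commuting with $\sigma$, the conjugation action of $H$ on $A$ has kernel exactly the order-$2$ subgroup $\langle \nu \rangle$, so it factors through the full automorphism group $\Aut(C_p) \cong C_{p-1}$; dually, $H$ acts on $\hat A$ through $C_{p-1}$ as well. The trivial character $\mathbf{1}$ is fixed by all of $H$, giving one orbit $\{\mathbf{1}\}$ with stabiliser $H_{\mathbf{1}} = H$. On the remaining $p-1$ nontrivial characters the induced $C_{p-1}$-action is simply transitive (it is the action of $(\Z/p\Z)^\times$ on the nonzero elements of $\F_p$), so they form a single orbit, represented by any nontrivial $\eta$, with stabiliser $H_\eta = \langle \nu \rangle \cong C_2$. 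This recovers the orbit data $H_{\mathbf{1}} = H$ and $H_\eta \cong C_2$ recorded above; this transitivity step — equivalently, the fact that $H$ surjects onto $\Aut(C_p)$, so that the order-$(p-1)$ image acts transitively on nontrivial characters — is the one point that genuinely uses the structure of the group rather than being formal, and it is the step I expect to require the most care.

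With the two orbits in hand the lemma follows immediately. For the orbit of $\mathbf{1}$ we have $A H_{\mathbf{1}} = C_p \rtimes C_{2(p-1)}$, so $[H : H_{\mathbf{1}}] = 1$ and each of the $2(p-1)$ one-dimensional characters $\rho$ of $H = C_{2(p-1)}$ yields a $1$-dimensional representation (these are exactly the characters inflated from the quotient $C_{2(p-1)}$). For the orbit of $\eta$ we have $A H_\eta = C_p \rtimes C_2$ of index $[H : H_\eta] = p-1$, and each of the two characters $\rho$ of $H_\eta \cong C_2$ yields a representation of dimension $(p-1)\cdot 1 = p-1$. Finally, as a consistency check one verifies the identity $\sum_\rho (\dim\rho)^2 = 2(p-1)\cdot 1^2 + 2\cdot(p-1)^2 = 2p(p-1) = |C_p \rtimes C_{2(p-1)}|$, confirming that these $2(p-1) + 2$ representations account for the whole group, as already guaranteed by Proposition~25.
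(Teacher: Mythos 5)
Your proposal is correct and follows essentially the same route as the paper: the paper likewise derives the lemma from Serre's Proposition~25 in \cite[\S 8.2]{Serre} after observing that the orbits of $H=C_{2(p-1)}$ on the characters of $A=C_p$ are $\{\mathbf{1}\}$ and the single orbit of nontrivial characters, with stabilisers $H_{\mathbf{1}}=H$ and $H_\eta=\langle\nu\rangle\cong C_2$. Your explicit verification of transitivity on nontrivial characters and the sum-of-squares check are slightly more detailed than the paper's treatment, but the argument is the same.
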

	
	Out of these two $(p-1)$-dimensional irreducible representations, exactly one is faithful. Since $H_\eta \cong C_2$, the representation $\xi$ needed for the construction described above is either the trivial representation of $H_\eta$, or the representation $\sgn$, defined by $\sgn(\nu)=-1$. So we obtain the two representations $\Ind_{C_{2p}}^{C_p\rtimes C_{2(p-1)}} \eta$ and $\Ind_{C_{2p}}^{C_p\rtimes C_{2(p-1)}} \eta \otimes \sgn$.
	
	\begin{enumerate}
		\item The representation $\Ind_{C_{2p}}^{C_p\rtimes C_{2(p-1)}} \eta$ is not faithful. In fact, 
		$$\tr (\Ind_{C_{2p}}^{C_p\rtimes C_{2(p-1)}} \eta) (1)=\tr (\Ind_{C_{2p}}^{C_p\rtimes C_{2(p-1)}} \eta) (\nu)= p-1.$$
		\item The representation $\Ind_{C_{2p}}^{C_p\rtimes C_{2(p-1)}} \eta \otimes \sgn$ is faithful. In fact we have, for $s \in C_p\rtimes C_{2(p-1)}$ and for $t_1,\dots,t_{p-1}$ a set of representatives for $C_p\rtimes C_{2(p-1)}/C_{2p}$:
		\[
		\tr (\Ind_{C_{2p}}^{C_p\rtimes C_{2(p-1)}} \eta \otimes \sgn) (s)= \sum_{i: t_i s t_i^{-1} \in C_{2p}} \eta(t_i s t_i^{-1}) \sgn (s).
		\]
		For the terms occurring in this sum (which are at most $p-1$) we have that $\eta(t_i s t_i^{-1})$ is some root of unity, and it is $1$ if and only if $s=1$. So for $s \neq 1$ we have a sum of at most $p-1$ roots of unity, different from $1$, and therefore $\tr (\Ind_{C_{2p}}^{C_p\rtimes C_{2(p-1)}} \eta \otimes \sgn) (s) \neq p-1$, or equivalently the representation is faithful.
	\end{enumerate}

	Now we can determine the representation $\rho_\ell \big|_{I_K}$. Since it factors through $I_G$, which is isomorphic to $C_p \rtimes C_{2(p-1)}$, we identify it with a representation of the finite group $C_p\rtimes C_{2(p-1)}$, and as such, it is faithful. Furthermore, the discussion above implies the following lemma.
	
	\begin{lemma}\label{rhoirred}
		The restriction to inertia of the representation $\rho_\ell$ is irreducible.
	\end{lemma}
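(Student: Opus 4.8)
The plan is to combine the faithfulness of $\rho_\ell\big|_{I_K}$ with the complete list of irreducible representations of $C_p \rtimes C_{2(p-1)}$ established above, together with a dimension count. By Lemma \ref{maxinertia} and the maximality hypothesis we have $\rho_\ell(I_K) \cong I_G \cong C_p \rtimes C_{2(p-1)}$, so viewing $\rho_\ell\big|_{I_K}$ as a representation of the finite group $C_p \rtimes C_{2(p-1)}$ it is faithful, and its dimension equals $2g = p-1$. I would state faithfulness carefully, since it comes from the isomorphism $\rho_\ell(I_K) \cong I_G$ of Lemma \ref{maxinertia} rather than from the construction of the representation.

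First I would invoke the classification lemma above, according to which every irreducible representation of $C_p \rtimes C_{2(p-1)}$ has dimension either $1$ or $p-1$. Decomposing $\rho_\ell\big|_{I_K}$ into irreducible constituents and matching dimensions, there are only two possibilities: either it is a single irreducible representation of dimension $p-1$, or it is a direct sum of $p-1$ (not necessarily distinct) one-dimensional representations, since no other combination of the allowed dimensions $1$ and $p-1$ sums to $p-1$.

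To rule out the second possibility I would argue that a direct sum of one-dimensional representations has abelian image: each one-dimensional constituent factors through the abelianisation of $C_p \rtimes C_{2(p-1)}$, so the whole sum takes values in the group of diagonal matrices, which is abelian. But $C_p \rtimes C_{2(p-1)}$ is non-abelian, as it surjects onto the Frobenius group $C_p \rtimes C_{p-1}$, so none of its faithful representations can have abelian image. This contradicts faithfulness, leaving only the first possibility, whence $\rho_\ell\big|_{I_K}$ is irreducible of dimension $p-1$; comparing with the two $(p-1)$-dimensional irreducibles classified above, it must in fact be the faithful one, $\Ind_{C_{2p}}^{C_p\rtimes C_{2(p-1)}} \eta \otimes \sgn$. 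There is no real obstacle here beyond this bookkeeping, as the substantive input was already assembled in the preceding classification and in Lemma \ref{maxinertia}.
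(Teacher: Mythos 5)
Your argument is correct and matches the paper's proof in essence: both use the classification of irreducible representations of $C_p \rtimes C_{2(p-1)}$ (dimensions $1$ or $p-1$ only) to conclude that a reducible $\rho_\ell\big|_{I_K}$ would be a sum of one-dimensional representations with abelian image, contradicting faithfulness on the non-abelian group $I_G$. Your closing identification of the representation as $\Ind_{C_{2p}}^{C_p\rtimes C_{2(p-1)}} \eta \otimes \sgn$ goes slightly beyond the lemma (it is the content of Proposition \ref{I_faithful}), but is harmless.
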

	
	\begin{proof}
		Suppose that $\rho_{\ell} \big|_{I_K}$ is reducible. Then, since it has dimension equal to $2g=p-1$, it is the sum of $p-1$ one-dimensional representations, but in this case the image would be abelian. However, this representation factors through $I_G$ and it is faithful as a $I_G$-representation, so since $I_G$ is non-abelian we have a contradiction. Therefore $\rho_\ell \big|_{I_K}$ must be irreducible.
	\end{proof}
	
	Note that as a consequence of this, the representation $\rho_\ell$ is also irreducible. This proves the following result.
	\begin{prop}\label{I_faithful}
		Let $X$ be a hyperelliptic curve over a $p$-adic field $K$ defined by an equation $(*)$, with potentially good reduction. Let $\rho_\ell$ be the $\ell$-adic Galois representation attached to $\Jac(X)$, for $\ell \neq p$. Suppose that $X$ satisfies the conditions of Proposition \ref{PROP:characterisation}, and let $I_G$ be as in \S \ref{statement}. Then the representation $\rho_\ell$ restricted to inertia factors through $I_G \cong C_p \rtimes C_{2(p-1)}$ and, as a representation of $I_G$, it is the unique irreducible faithful representation of dimension $p-1$.
	\end{prop}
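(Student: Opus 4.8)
The plan is to assemble results already in hand. By the Criterion of N\'eron--Ogg--Shafarevich, $\rho_\ell|_{I_K}$ has finite image, and by Lemma \ref{maxinertia} this image is isomorphic to $\Gal(L/K^{nr}) \cong I_G$, where $L=K^{nr}(\{\sqrt{\alpha_i-\alpha_j}\}_{i,j})$. Since $L \supseteq K^{nr}$, the kernel of $\rho_\ell|_{I_K}$ is $\Gal(\overline K/L)$, so $\rho_\ell|_{I_K}$ factors through $I_K/\Gal(\overline K/L) \cong \Gal(L/K^{nr}) \cong I_G$. Under the hypotheses of Proposition \ref{PROP:characterisation}, the group $I_G$ is the semidirect product $C_p \rtimes C_{2(p-1)}$ of order $2p(p-1)$, so we may regard $\rho_\ell|_{I_K}$ as a representation of this finite group.

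First I would check faithfulness. The induced homomorphism $I_G \to \Aut(T_\ell\Jac(X)\otimes\overline\Q_\ell)$ has image $\rho_\ell(I_K)$, whose order equals $|I_G|$ by Lemma \ref{maxinertia} together with Proposition \ref{PROP:characterisation}; being a surjection onto a group of the same finite order, it is an isomorphism onto its image, hence injective. Thus $\rho_\ell|_{I_K}$ is a faithful representation of $C_p \rtimes C_{2(p-1)}$, and its dimension is $2g = p-1$. By Lemma \ref{rhoirred} it is moreover irreducible.

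It then remains only to identify it, which follows from the classification established above. The group $C_p \rtimes C_{2(p-1)}$ has exactly $2(p-1)$ one-dimensional irreducible representations, none of which can be faithful since $I_G$ is non-abelian and one-dimensional representations factor through its abelianisation; and it has exactly two irreducible representations of dimension $p-1$, of which precisely $\Ind_{C_{2p}}^{C_p\rtimes C_{2(p-1)}}\eta\otimes\sgn$ is faithful. Hence an irreducible faithful representation of dimension $p-1$ is unique, and $\rho_\ell|_{I_K}$ must coincide with it.

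No single step is a genuine obstacle here; the argument is essentially bookkeeping on top of the preceding lemmas, and the only point requiring care is the faithfulness claim, which I would derive from the equality $|\rho_\ell(I_K)|=|I_G|$ guaranteed by the maximality hypothesis rather than attempt to re-prove from scratch.
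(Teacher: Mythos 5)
Your proposal is correct and takes essentially the same route as the paper: there the proposition is obtained as a direct summary of Lemma \ref{maxinertia}, the classification of the irreducible representations of $C_p \rtimes C_{2(p-1)}$ via Serre's Proposition 25 (two $(p-1)$-dimensional irreducibles, exactly one faithful), and the irreducibility Lemma \ref{rhoirred}. Your order-counting argument for faithfulness simply makes explicit what the paper asserts without comment when it identifies $\rho_\ell\big|_{I_K}$ with a faithful representation of the finite group $I_G$.
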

	
	\begin{rmk}\label{m2d2}
		By \cite[Theorem 10.1]{M2D2}, the representation given by the action of $I_K$ on the first \'etale cohomology group $H^1_{\acute{e}t}(X \times_K \overline K,\Q_\ell)$ is given by
		\[
		\rho^* = \gamma \otimes (\Q_\ell [R] \ominus \mathbf{1}),
		\]
		where $\gamma$ is a certain character of order $2(p-1)$. This is immediate from op. cit. since the cluster picture of $X$ only contains the cluster $R$ described in Remark \ref{cluster}.
		
		Now, the representation $\rho_\ell\big|_{I_K}$ is dual to $\rho^*$, by \cite[Theorem 15.1]{corsil}, but since by \cite[Theorem 2 (ii)]{10.2307/1970722} its character has integer values, it is in fact isomorphic to it.
	\end{rmk}
	
	\section{The good model and the action of Frobenius} \label{frobenius}
	
	In this section we show that any hyperelliptic curve satisfying the hypotheses of Theorem \ref{maintheorem} has a good model, i.e. an integral model with good reduction, over the field $F$ defined in Section \ref{statement}, that reduces to
	\[
	y^2 = x^p -x
	\]
	on the residue field. We then prove that the action of Frobenius is diagonalisable, with eigenvalues:
	\begin{enumerate}
		\item all equal to $\left(\lagrange{-1}{p} p\right)^{f_{K/\Q_p}/2}$, if $f_{K/\Q_p}$ is even;
		\item half equal to $\left(\lagrange{-1}{p} p\right)^{f_{K/\Q_p}/2}$ and half equal to $-\left(\lagrange{-1}{p} p\right)^{f_{K/\Q_p}/2}$, if $f_{K/\Q_p}$ is odd.
	\end{enumerate}
	
	In particular we deduce that the full Galois representation $\rho_\ell$ is completely determined by these data when $f_{K/\Q_p}$ is even.
	
	As observed in Section \ref{inertia}, the extension $F/K$ is totally ramified, so the residue fields of $F$ and $K$ are both equal to $k$. Therefore we will identify the action of $\Frob_K$ with that of $\Frob_F$, which is well defined.
	
	\begin{lemma}\label{lem:good_model}
		Under the assumptions of Theorem \ref{maintheorem}, the base change of $X$ to $F$ has an integral model reducing to $y^2=x^p-x$ on $k$.
	\end{lemma}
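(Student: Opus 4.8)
The plan is to write down an explicit good model by translating and rescaling the $x$-coordinate according to the single cluster, and then to identify the special fibre by exploiting the automorphism coming from wild inertia.

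First I would normalise. By Remark \ref{cluster} the roots $\alpha_1,\dots,\alpha_p$ form a single cluster, so all the differences $\alpha_i-\alpha_j$ share one common valuation $d$. Put $u=\alpha_2-\alpha_1$ and set $X=(x-\alpha_1)/u$, so that $f(x)=u^p\prod_{i=1}^p(X-\beta_i)$ with $\beta_i=(\alpha_i-\alpha_1)/u\in\OK_F$; note $\beta_1=0$, $\beta_2=1$, and $v(\beta_i-\beta_j)=0$ for $i\neq j$, so the reductions $\bar\beta_i$ are pairwise distinct. To absorb the factor $u^p$ I would use the defining property of $F$: since $\sqrt{\alpha_2-\alpha_1}=\sqrt u\in F$, the element $w=u^{(p-1)/2}\sqrt u$ lies in $F$ and satisfies $w^2=u^p$. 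Substituting $Y=y/w$ then gives the monic integral model $Y^2=g(X)$ with $g(X)=\prod_i(X-\beta_i)\in\OK_F[X]$, whose reduction $\bar g$ is separable. This already exhibits good reduction over $F$; the remaining and essential task is to prove that $\bar g(X)=X^p-X$, equivalently that $\{\bar\beta_1,\dots,\bar\beta_p\}=\F_p$.

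Next I would bring in the wild inertia. Fixing the $p$-cycle $\sigma$ of Section \ref{statement}, which acts on the roots by $\alpha_i\mapsto\alpha_{i+1}$, I would use that a lift of $\sigma$ to $I_K$ acts trivially on the residue field and hence descends to an honest automorphism $\bar\phi$ of the special fibre $\bar X$ (the reduction of the inertia action on a variety with good reduction; this is also the geometric incarnation of the faithful action of $I_G$ established in Proposition \ref{I_faithful}). Chasing the $x$-coordinate, which is defined over $K$ and therefore Galois-equivariant, the branch point with $x=\alpha_i$ is sent to the one with $x=\alpha_{i+1}$, so on the special fibre $\bar\phi$ permutes the finite branch points by $\bar\beta_i\mapsto\bar\beta_{i+1}$. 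As these are $p$ distinct points, $\bar\phi$ restricts to a $p$-cycle on them; being an element of $\mathrm{PGL}_2(\overline k)$ of order divisible by $p$ in characteristic $p$, it is unipotent of order exactly $p$, with a unique fixed point on $\proj^1$.

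Finally I would pin down $\bar\phi$. Because $\deg g=p$ is odd, the point at infinity of the $X$-line is the $(p+1)$-th branch point; since $\bar\phi$ permutes all $p+1$ branch points and the finite ones already form a single $p$-orbit, the point $\infty$ must be the fixed point of $\bar\phi$. Thus $\bar\phi$ acts on the reduced coordinate $t$ by an affine map $t\mapsto rt+s$ of order $p$, which forces $r=1$ (an affine map with $r\neq1$ has order equal to the multiplicative order of $r$, coprime to $p$), and evaluating $\bar\phi(\bar\beta_1)=\bar\beta_2$ together with $\bar\beta_1=0$, $\bar\beta_2=1$ gives $s=1$. Hence $\bar\beta_i=i-1$ for all $i$, so $\{\bar\beta_i\}=\F_p$ and $\bar g(X)=\prod_{a\in\F_p}(X-a)=X^p-X$, as required. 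The step I expect to be the main obstacle is the middle one: making rigorous that a wild inertia element descends to a genuine automorphism of the smooth special fibre and identifying its action on the branch locus, i.e. the passage from the semilinear Galois action over $\OK_F$ to a linear automorphism over $k$. Once this is in place, the characteristic-$p$ geometry of order-$p$ elements of $\mathrm{PGL}_2$ forces the finite branch locus to be a translate of $\F_p$, and the normalisation $\beta_1=0$, $\beta_2=1$ removes the remaining scaling ambiguity, so that the reduction is literally $y^2=x^p-x$ rather than merely isomorphic to it.
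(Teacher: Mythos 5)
Your proposal is correct, and while its first half coincides with the paper's, the heart of your argument is genuinely different. The normalisation is exactly the paper's change of variables $x \mapsto (\alpha_2-\alpha_1)x+\alpha_1$, $y \mapsto \sqrt{\alpha_2-\alpha_1}^{\,p}\,y$, with integrality of the $\beta_i$ and separability of the reduction coming from the single-cluster structure (all differences $\alpha_i-\alpha_j$ sharing one valuation). But where the paper identifies the special fibre by a direct coefficient computation --- writing $\alpha_i-\alpha_1=\sum_{j=0}^{i-2}\sigma^j(\alpha_2-\alpha_1)$ as a telescoping sum and invoking the wild-inertia congruence $\sigma^j(\alpha_2-\alpha_1)/(\alpha_2-\alpha_1)\equiv 1$ from \cite[Lemma 3.3]{1812.05651} to get $\bar\beta_i=i-1$ directly --- you instead reduce the inertia action itself and argue with the geometry of the special fibre: $\sigma$ induces an automorphism of $\tilde X$ permuting the $p$ finite branch points in a $p$-cycle and fixing $\infty$, and an affine map of $\proj^1$ in characteristic $p$ admitting a $p$-cycle on points must be a translation, pinned down to $t\mapsto t+1$ by $\bar\beta_1=0$, $\bar\beta_2=1$. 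The step you flag as the main obstacle --- that a semilinear inertia element descends to an honest automorphism of the smooth special fibre, computed as $\red \circ \beta \circ \sigma \circ \beta^{-1} \circ \lift$ --- is precisely \cite[Theorem 1.5 and Remark 1.7]{1809.10208} (see also \cite[\S 6.5]{quotients}), which this paper itself invokes in \S\ref{sub:proof} for the trace of $\sigma\Frob_K$; so your route stays within the paper's own toolkit, and you should cite that result rather than leave it as a gap. Pleasingly, your branch-point chase does not even need the wild congruence: applying $\beta\circ\sigma\circ\beta^{-1}$ to the Weierstrass point over $\alpha_i$ gives coordinate $\bigl(\sigma(\alpha_2-\alpha_1)\sigma(\beta_i)+\alpha_2-\alpha_1\bigr)/(\alpha_2-\alpha_1)=\beta_{i+1}$ exactly, the $\sigma(\alpha_2-\alpha_1)$ cancelling --- wildness resurfaces only in the conclusion that the induced automorphism is an order-$p$ translation. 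The trade-off: the paper's computation is shorter, self-contained (valuations plus one congruence lemma), and is reused verbatim in \S\ref{sub:proof}; yours is more conceptual, explaining \emph{why} the reduction must be $x^p-x$ (order-$p$ elements of $\mathrm{PGL}_2(\overline k)$ in characteristic $p$ are unipotent). One small repair: your passage from $\Aut(\tilde X)$ to $\mathrm{PGL}_2(\overline k)$ via uniqueness of the hyperelliptic involution needs $g\geq 2$, i.e.\ $p\geq 5$; for $p=3$ note instead that the induced automorphism commutes with the reduction of the $K$-rational involution $(x,y)\mapsto(x,-y)$, since $\beta$ and $\sigma$ both do, so the descent to the $x$-line holds for all $p$ (alternatively, the paper defers $p=3$ to \cite{1812.05651} anyway).
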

	
	\begin{proof}
		Over $F$, we can define the following change of variables:
		\[
		\left\lbrace
		\begin{array}{l}
			x \mapsto (\alpha_2-\alpha_1) x + \alpha_1  \\
			y \mapsto \sqrt{\alpha_2 - \alpha_1}^p y.
		\end{array}
		\right.
		\]
		
		Then applying this change of variables to $X \times_K F$ we have the following equation:
		\[
		y^2 = \prod_{1 \leq i \leq p} \left(x - \dfrac{\alpha_i - \alpha_1}{\alpha_2-\alpha_1}\right).
		\]
		
		Note that for each $i \in \{2,\dots,p\}$, we have
		$$ \alpha_i - \alpha_1 = \sum_{j=0}^{i-2} \sigma^j(\alpha_2 - \alpha_1), $$
		where $\sigma$ is the $p$-cycle defined in Section \ref{statement}. Since $\sigma$ is a wild inertia element, $\dfrac{\sigma^j(\alpha_2-\alpha_1)}{\alpha_2-\alpha_1}$ reduces to $1$ on $k$ (see \cite[Lemma 3.3]{1812.05651}), therefore the reduction of $\prod_{1 \leq i \leq p} \left(x - \dfrac{\alpha_i - \alpha_1}{\alpha_2-\alpha_1}\right)$ is $\prod_{1 \leq i \leq p} (x - (i-1)) = x^p-x$.
	\end{proof}
	
	\begin{rmk}
		We know from the Criterion of N\'eron-Ogg-Shafarevich (\cite[\S2 Theorem 2(ii)]{10.2307/1970722}) that $\Jac(X)$ acquires good reduction over $F$; this lemma shows that the curve $X$ itself acquires good reduction over the same extension.
	\end{rmk}
	
	Since $F/K$ is totally ramified of degree $2p(p-1)$, there is an intermediate extension $F'$ such that $F/F'$ is wild of degree $p$ and $F'/K$ is tame of degree $2(p-1)$. Hence there exists some $2(p-1)$-th root of the uniformiser $\pi_K$ of $K$ that generates $F'/K$. Now, since $K$ is a finite extension of $\Q_p$, it contains all the $(p-1)$-th roots of unity; moreover $K$ also contains a primitive $2(p-1)$-th root of unity if and only if the unramified part of the extension $K/\Q_p$ has even degree, i.e. if $f_{K/\Q_p}$ is even. Therefore the Galois closure of $F/K$ (which as observed in Section \ref{inertia} is equal to $K(\Jac(X)[4])$) is given by $F(\zeta_{2(p-1)})$, where $\zeta_{2(p-1)}$ is a primitive $2(p-1)$-th root of unity. In particular $F/K$ is Galois if and only if $f_{K/\Q_p}$ is even, and if it is odd then $[F(\zeta_{2(p-1)}):F]=2$.
	
	We are now ready to compute $\rho_\ell(\Frob_F)$, hence $\rho_\ell(\Frob_K)$.
	
	\subsection{The action of Frobenius}\label{sec:Frobenius}
	Suppose first that $n=f_{K/\Q_p}$ is even. Then $\Frob_F$ is central in $\Gal(L/K^{nr})$ (recall $L=F^{nr}$) and, since $\rho_\ell$ is irreducible, by Schur's Lemma $\rho_\ell(\Frob_F)$ is a scalar matrix. Let $\lambda \in \overline{\Q}_\ell$ be such that $\rho_\ell(\Frob_F)=\lambda \id$. Then we know $\det(\rho_\ell(\Frob_F))= |k|^g=p^{ng}$ and so $\lambda^{2g}=p^{ng}$. On the other hand since $\rho_\ell(\Frob_F)$ is a scalar matrix then its characteristic polynomial is precisely $(T-\lambda)^{2g}$, and by \cite[ Theorem 1.6]{Deligne} it has integral coefficients, so $\lambda \in \Z$ and in particular $\lambda \in \{\pm p^{n/2}\}$. Finally, since $F/K$ is Galois, we have $F=K(\Jac(X)[4])$, so $\rho_\ell(\Frob_F)$ acts trivially modulo $4$ and $\lambda \equiv 1 \pmod 4$. Hence
	$$ \lambda = \left(\lagrange{-1}{p} p\right)^{f_{K/\Q_p}/2}. $$
	
	Suppose now that $n=f_{K/\Q_p}$ is odd. In general, $\rho_\ell(\Frob_F)$ acts as the $n$-th power of the linear operator obtained for $n=1$, so we may first assume that $n=1$. Then the square of $\Frob_F$ is central in $\Gal(L/K^{nr})$, hence the minimal polynomial of $\rho_\ell(\Frob_F)$ is of the form $T^2-\mu$. As a consequence of the Weil Conjectures (see again \cite[Theorem 1.6]{Deligne}) we also have that the trace of $\rho_\ell(\Frob_F)$ is given by $p+1-|\tilde{X}_F(\F_p)|$ where $\tilde{X}_F$ is the reduction modulo $p$ of $X \times_K F$. Since for each $x \in \F_p$, $x^p-x=0$, we have precisely $p$ affine points on $\tilde{X}_F$, so $\tr(\rho_\ell(\Frob_F))=0$. Therefore the characteristic polynomial of $\rho_\ell(\Frob_F)$ has $g$ roots equal to $\sqrt{\mu}$ and $g$ roots equal to $-\sqrt{\mu}$, hence it is $(T^2-\mu)^g$, and again it has constant term equal to $p^g$ and integer coefficients, hence $\mu \in \{\pm p\}$. As in the previous case, we have $\mu \equiv 1 \pmod 4$ and so $\mu = \lagrange{-1}{p}p$. Putting all this together, the eigenvalues of $\rho_\ell(\Frob_F)$ for generic odd $f_{K/\Q_p}$ are
	$$ \pm \sqrt{\lagrange{-1}{p}p}^{f_{K/\Q_p}}, $$
	each occurring $g$ times.
	
	Now we can prove Theorem \ref{maintheorem} in the case of even $f_{K/\Q_p}$.
	
	\begin{proof}[Proof of Theorem \ref{maintheorem} for even inertia degree.]
		Since $f_{K/\Q_p}$ is even, we know $F/K$ is Galois with Galois group isomorphic to its inertia subgroup. We furthermore have 
		$$\Gal(L/K)=\Gal(F/K) \times \Gal (K^{nr}/K),$$
		since $L=F^{nr}=F K^{nr}$. If we define $\chi$ as in the statement of Theorem \ref{maintheorem} we have that $\rho_\ell(\Frob_F) = \chi(\Frob_F) \id = \chi(\Frob_K) \id$ and therefore if we let $\psi = \rho_\ell \otimes \chi^{-1}$, then $\psi$ factors through $\Gal(F/K)$ which is isomorphic to $I_G$, and as a representation of this group it is irreducible, faithful and $(p-1)$-dimensional. By Lemma \ref{I_faithful}, there exists a unique such representation.
	\end{proof}
	
	\section{The case of odd inertia degree} \label{proof}
	
	In this final section, we complete the proof of Theorem \ref{maintheorem} for the case when $f_{K/\Q_p}$ is odd, computing explicitly $\psi$.
	
	Let $\chi$ be as in the statement of Theorem \ref{maintheorem}. Then we can fix a basis of $T_\ell \Jac (X)$ such that the matrix representing $\psi(\Frob_K)=\dfrac{1}{\chi(\Frob_K)}\rho_\ell(\Frob_K)$ in this basis is diagonal with the first $g$ coefficients equal to $1$ and the last $g$ coefficients equal to $-1$. In particular $\psi(\Frob_K^2)=\id$ and so $\Frob_K^2 \in \ker (\psi)$. Therefore we have that $\ker (\psi)= \Gal(\overline K / F(\zeta_{2(p-1)}))$ and so $\psi$ factors through $G=\Gal(F(\zeta_{2(p-1)})/K)$, and it is faithful as a representation of $G$. Now $G$ is generated by $I_G$ and the element $\phi$ defined in Section \ref{statement}, with $G \cong I_G \rtimes \langle \phi \rangle$. Note that $\phi$ is the reduction of $\Frob_K$ modulo its square. The group $G$ has the following presentation:
	\[
	G = \langle \sigma, \tau, \phi | \sigma^p=\tau^{2(p-1)}=\phi^2=1, \tau \sigma \tau^{-1} = \sigma^b, \sigma \phi = \phi \sigma, \phi \tau \phi= \tau^p \rangle.
	\]
	
	In the diagram below we show the relations among the fields $K$, $K(\zeta_{2(p-1)})$, $F$, $F(\zeta_{2(p-1)})$, $L$, $\overline{K}$ and we highlight the relevant Galois groups (here $\overline \phi$ is the image of $\phi$ in $G/I_G$).
	
	\begin{center}
		\begin{tikzcd}[column sep=tiny]
			& \overline{K} \arrow[dash]{d} &   \\
			& L \arrow[dash]{dl} \arrow[dash]{dr}&   \\
			F(\zeta_{2(p-1)}) \arrow[dash]{ddr}[description]{G} \arrow[dash]{d} \arrow[dash]{dr}[description]{I_G} &   & K^{nr} \arrow[dash]{dl}[description]{\langle \Frob_K^2\rangle}\\
			F \arrow[dash]{dr}    & K(\zeta_{2(p-1)}) \arrow[dash]{d}{\langle \overline \phi \rangle} &   \\
			& K &   
		\end{tikzcd}
	\end{center}
	
	\begin{lemma}
		The group $G$ is isomorphic to a semidirect product
		$$ C_p \rtimes (C_{2(p-1)} \rtimes C_2).$$
	\end{lemma}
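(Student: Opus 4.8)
The plan is to read off the semidirect-product structure directly from the presentation of $G$ recorded above, using the known order $|G|=4p(p-1)$ as the final input that rules out any collapse of the presentation. Write $N=\langle\sigma\rangle$ and $H=\langle\tau,\phi\rangle$. First I would check that $N$ is normal in $G$: the relation $\tau\sigma\tau^{-1}=\sigma^b$ shows that $\tau$ normalises $N$ (and since $(b,p)=1$ it acts as an automorphism of $C_p$), while $\sigma\phi=\phi\sigma$ shows that $\phi$ centralises $\sigma$. As $N$ is stable under conjugation by each of the generators $\sigma,\tau,\phi$ of $G$, it is normal, and since $\sigma$ has order exactly $p$ we get $N\cong C_p$.

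Next I would analyse $H$. The relation $\phi\tau\phi=\tau^p$, together with $\phi^2=1$, shows that $\phi$ normalises $\langle\tau\rangle$, so $H=\langle\tau\rangle\langle\phi\rangle$. To identify $H$ with $C_{2(p-1)}\rtimes C_2$ I must verify that sending $\tau\mapsto\tau^p$ defines a genuine automorphism of $C_{2(p-1)}$ of order dividing $2$: coprimality holds because $p$ is an odd prime, so $(p,2)=1$ and $(p,p-1)=1$, whence $(p,2(p-1))=1$; and $p^2-1=(p-1)(p+1)$ is divisible by $2(p-1)$ because $p+1$ is even, giving $p^2\equiv 1\pmod{2(p-1)}$. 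Thus $H$ is a homomorphic image of $C_{2(p-1)}\rtimes C_2$, and in particular $|H|\le 4(p-1)$.

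Finally I would combine these facts with the order count. Since $N$ is normal and $G=\langle\sigma,\tau,\phi\rangle=NH$, we have $|G|=|N|\,|H|/|N\cap H|$. Now $N\cap H$ is a subgroup of $N\cong C_p$, hence either trivial or all of $N$, and $|G|=4p(p-1)$ (from $[F(\zeta_{2(p-1)}):K]=[F:K]\cdot[F(\zeta_{2(p-1)}):F]=2p(p-1)\cdot 2$ in the odd case). The possibility $N\subseteq H$ would force $|G|=|H|\le 4(p-1)$, a contradiction, so $N\cap H=1$. This yields $|H|=4(p-1)$, which in turn forces $\phi\notin\langle\tau\rangle$ and hence $H\cong C_{2(p-1)}\rtimes C_2$, and simultaneously $G=N\rtimes H\cong C_p\rtimes\bigl(C_{2(p-1)}\rtimes C_2\bigr)$, as claimed.

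The only real subtlety is ensuring that the abstract presentation does not collapse to a group smaller than $4p(p-1)$; I expect this to be the main point of the argument. Rather than establishing it by a direct normal-form computation on words in $\sigma,\tau,\phi$, the plan is to settle it cleanly by feeding in the field-theoretic order $|G|=4p(p-1)$, which both pins down $|H|=4(p-1)$ and forces the three intersections to be trivial, so that the re-association of the iterated semidirect product is automatic.
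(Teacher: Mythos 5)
Your argument is correct, but it reaches the lemma by a genuinely different route than the paper does. The paper's proof never touches the presentation: starting from $G \cong I_G \rtimes \langle\phi\rangle \cong (C_p\rtimes C_{2(p-1)})\rtimes C_2$, it observes that the wild inertia subgroup $C_p$ is normal in $G$, and then produces a complement by field theory, namely the subgroup $\Gal(F(\zeta_{2(p-1)})/K(\alpha_1)) \cong C_{2(p-1)}\rtimes C_2$ of index $p$ cut out by the degree $p$ intermediate field $K(\alpha_1)$; the re-associated semidirect product then falls out. You instead read everything off the presentation recorded just before the lemma: $N=\langle\sigma\rangle$ is normal because each generator normalises it, the relation $\phi\tau\phi=\tau^p$ makes $H=\langle\tau,\phi\rangle$ a quotient of $C_{2(p-1)}\rtimes C_2$ (your verification that $\tau\mapsto\tau^p$ is an automorphism of order dividing $2$, via $(p,2(p-1))=1$ and $2(p-1)\mid p^2-1$, is exactly right), and the field-theoretic count $|G|=[F(\zeta_{2(p-1)}):K]=2p(p-1)\cdot 2$ forces $N\cap H=1$ and $|H|=4(p-1)$, whence $G=N\rtimes H$ with $H\cong C_{2(p-1)}\rtimes C_2$. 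Both proofs are legitimate given what the paper has established at that point; yours buys an explicit description of the action ($\phi$ acts on $\tau$ as $\tau\mapsto\tau^p$, i.e.\ as Frobenius on the tame part) together with an honest check that the presentation does not collapse, at the cost of leaning on the relation $\phi\tau\phi=\tau^p$, which the paper asserts without proof and which its own argument bypasses entirely by exhibiting the complement as a concrete Galois group. One small simplification available to you: $\sigma$ is by construction a $p$-cycle in the concrete group $I_G$, so its order is exactly $p$ without any appeal to the order count, and likewise $N\cap H$ being trivial or all of $N$ needs no caveat about possible collapse.
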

	
	\begin{proof}
		Since $\langle \phi \rangle \cong C_2$, we know that $ G \cong (C_p \rtimes C_{2(p-1)}) \rtimes C_2$. Moreover the subgroup given by wild inertia is normal, so $G$ has a normal subgroup isomorphic to $C_p$. We only need to prove that $G$ also has a subgroup isomorphic to $C_{2(p-1)}\rtimes C_2$. The field $K(\alpha_1)$ is an intermediate extension of degree $p$ over $K$, and $\Gal(F(\zeta_{2(p-1)}) /K(\alpha_1)) \cong C_{2(p-1)}\rtimes C_2$ is a subgroup of $G$.
	\end{proof}
	
	In particular $G$ is of the form $A \rtimes H$, with $A$ abelian, as in \cite[\S 8.2]{Serre}. Again we can use Proposition 25 of op. cit. to describe the irreducible representations of $G$.
	
	\subsection{The irreducible representations of the group $G$}\label{irreps_G}
	
	A set of representatives for the orbits of $H$ in the group of characters of $A$ consists, as in Section \ref{inertia}, only of the two elements $\mathbf{1}$ and $\eta$, for $\eta$ any non-trivial character. It is easy to check that, with the same notation as in Section \ref{inertia}, $H_{\mathbf{1}}=H$ and $H_{\eta} = \langle \phi, \nu\rangle \cong C_2^2$. All the representations of $H_{\mathbf{1}}$ give rise to a representation of $G$ of the same dimension. Now $H_{\mathbf{1}} \cong C_{2(p-1)} \rtimes C_2$ is itself a semidirect product of two abelian subgroups, so using Proposition 25 of \cite{Serre} we have that all its irreducible representation have dimension dividing the order of the second subgroup, that is either $2$ or $1$. However $\psi$ is irreducible of dimension $p-1$ (since $\rho_\ell$ is), so unless $p=3$ it cannot arise from such a representation. For $p=3$ we need a more direct approach, and this case is dealt with in \cite[\S 3]{1812.05651}, so we can assume $p \neq 3$.
	
	Now let us consider the representations arising from $H_\eta$. Since this group is abelian, it only has $1$-dimensional irreducible representations, namely those given by the following characters:
	$$
	\begin{array}{c|rrrr}
		\rm class& 1 & \nu & \phi & \nu \phi\cr
		\hline
		\xi_{1}&1&1&1&1\cr
		\xi_{2}&1&1&-1&-1\cr
		\xi_{3}&1&-1&1&-1\cr
		\xi_{4}&1&-1&-1&1\cr
	\end{array}
	$$
	
	The irreducible representations arising from these four representations are
	$$ \Ind_{C_p \times C_2^2}^G \xi_j \otimes \eta $$
	for $j \in \{1,\dots,4\}$ (note that the subgroup of $G$ isomorphic to $C_p \rtimes C_2^2$ is in fact a direct product). In particular these representations have dimension equal to $[G:C_p \times C_2^2]=p-1$. Following the same proof as in Lemma \ref{I_faithful} we have that only the representations arising from $\xi_3$ and $\xi_4$ are faithful, so $\psi$ is one of these two.
	
	Let $\sigma,\tau$ be the generators of $I_G$, as in the previous sections.
	
	\begin{lemma}\label{pip}
		The representations $\psi_1 = \Ind_{C_p \times C_2^2}^G \xi_3 \otimes \eta$ and $\psi_2 = \Ind_{C_p \times C_2^2}^G \xi_4 \otimes \eta$ are such that $\tr (\psi_1(\sigma \phi))=- \tr(\psi_2(\sigma \phi))= \sqrt{\lagrange{-1}{p}p}$.
	\end{lemma}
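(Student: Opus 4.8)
The plan is to compute $\tr(\psi_j(\sigma\phi))$ directly from the formula for an induced character and then to recognise the resulting sum as a classical quadratic Gauss sum. First I would fix coset representatives for $G/M$, where $M = C_p\times C_2^2 = \langle\sigma\rangle\times\langle\nu,\phi\rangle$ is the subgroup we induce from. Since $[G:M]=p-1$ and $M\cap\langle\tau\rangle = \langle\nu\rangle = \langle\tau^{p-1}\rangle$, the elements $t_i = \tau^i$ for $i=0,\dots,p-2$ form such a set. Using $\tau\sigma\tau^{-1}=\sigma^b$ together with $\phi\tau\phi=\tau^p$ (which gives $\tau^i\phi\tau^{-i}=\phi\tau^{i(p-1)}=\phi\nu^i$), I would compute
\[
t_i(\sigma\phi)t_i^{-1} = (\tau^i\sigma\tau^{-i})(\tau^i\phi\tau^{-i}) = \sigma^{b^i}\phi\nu^i,
\]
and observe that every one of these conjugates lies in $M$, so all $p-1$ representatives contribute to the induced-character sum, following the same bookkeeping as in the faithfulness computation in Section \ref{inertia}.

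Next, since the induced representation does not depend on the choice of nontrivial character $\eta$ within its single $H$-orbit, I would take $\eta(\sigma)=\zeta_p=e^{2\pi i/p}$ under the fixed embedding $\overline{\Q}_\ell\to\C$. Evaluating $\xi_j\otimes\eta$ on $\sigma^{b^i}\phi\nu^i$ and factoring yields
\[
\tr(\psi_j(\sigma\phi)) = \xi_j(\phi)\sum_{i=0}^{p-2}\xi_j(\nu)^i\,\zeta_p^{b^i}.
\]
Because $\xi_3(\nu)=\xi_4(\nu)=-1$ while $\xi_3(\phi)=1=-\xi_4(\phi)$, the two traces are negatives of one another, which already gives the first asserted equality; it then remains to evaluate the common sum $S=\sum_{i=0}^{p-2}(-1)^i\zeta_p^{b^i}$ and to show $S=\sqrt{\lagrange{-1}{p}p}$, which corresponds to the $\psi_1$ normalisation.

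The key reduction is that $b$ is a primitive root modulo $p$: this is forced by the faithful action of $C_{p-1}$ in the Frobenius quotient $C_p\rtimes C_{p-1}$, so $\tau$ induces an automorphism of $C_p$ of order exactly $p-1$. Hence as $i$ runs over $0,\dots,p-2$ the powers $b^i$ run over all of $(\Z/p)^\times$ exactly once, and $(-1)^i=\lagrange{b}{p}^i=\lagrange{b^i}{p}$ since $b$ is a non-residue. Therefore $S=\sum_{a\in(\Z/p)^\times}\lagrange{a}{p}\zeta_p^{a}$ is the standard quadratic Gauss sum. I would then invoke Gauss's evaluation, namely $S=\sqrt p$ when $p\equiv 1\pmod 4$ and $S=i\sqrt p$ when $p\equiv 3\pmod 4$, which is exactly $\sqrt{\lagrange{-1}{p}p}$ under the convention $\sqrt{-p}=i\sqrt p$ fixed in Section \ref{statement}.

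The main obstacle is the determination of the sign of the Gauss sum: the identity $S^2=\lagrange{-1}{p}p$ is elementary, but establishing that $S$ is the positive rather than the negative square root is precisely Gauss's sign theorem, and it is exactly what distinguishes $\psi_1$ from $\psi_2$. The point requiring care alongside it is the choice of primitive root $\zeta_p=\eta(\sigma)$, since replacing $\eta$ by a Galois conjugate would rescale $S$ by a Legendre symbol; the argument relies on the fact that all nontrivial characters of $A$ lie in a single $H$-orbit and hence yield the same induced representation, which legitimises the convenient choice $\eta(\sigma)=e^{2\pi i/p}$.
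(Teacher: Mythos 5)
Your proof is correct and follows essentially the same route as the paper's: the same coset representatives $\tau^i$ for $G/(C_p\times C_2^2)$, the same use of the relation $\phi\tau\phi=\tau^p$ to produce the alternating sign $(-1)^i=\lagrange{b^i}{p}$, and the same reduction to the classical quadratic Gauss sum $\sum_a \lagrange{a}{p}e^{2\pi i a/p}$ evaluated by Gauss's sign theorem, with your treatment in fact more explicit than the paper's about why $b$ must be a primitive root modulo $p$. The one imprecision is your claim that the induced representation is independent of the choice of $\eta$ within its orbit --- replacing $\eta$ by $\eta^c$ with $c$ a quadratic non-residue actually interchanges $\psi_1$ and $\psi_2$, precisely the Legendre-symbol rescaling you yourself flag --- but since the paper makes the identical normalisation $\eta(\sigma)=e^{2\pi i/p}$ and only the pair $\{\psi_1,\psi_2\}$ with its distinguishing traces $\pm\sqrt{\lagrange{-1}{p}p}$ matters downstream, this is not a genuine gap.
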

	
	\begin{proof}
		First of all, it is easy to check that $C_p \times C_2^2$ is a normal subgroup of $G$. Moreover a set of representatives for $G/(C_p \times C_2^2)$ is given by $\tau,\tau^2,\dots,\tau^{p-1}$. We have 
		$$\tr (\psi_j (\sigma \phi))= \sum_{i=1}^{p-1} (\xi_{j+2} \otimes \eta) (\tau^i\sigma \phi\tau^{-i})= \sum_{i=1}^{p-1} \xi_{j+2} (\tau^i \phi \tau^{-i}) \eta (\tau^{i} \sigma \tau^{-i}).$$
		
		By the relation $\phi \tau \phi = \tau^p$ we deduce $\tau^2 \phi = \phi \tau^2$; so if $i$ is even then $\xi_{j+2} (\tau^i \phi \tau^{-i}) = \xi_{j+2}(\phi)$, and if $i$ is odd then $\xi_{j+2} (\tau^i \phi \tau^{-i}) = \xi_{j+2}(\phi \nu) = - \xi_{j+2}(\phi)$ (recall  that $\nu = \tau^{p-1}$). On the other hand, since $\tau \sigma \tau^{-1} = \sigma^b$, then $\eta(\tau^i \sigma \tau^{-i})$ varies among all the powers of $\eta(\sigma)$, which is a primitive $p$-th root of unity, without loss of generality we can assume it is $e^{2 \pi i/p}$, seen as a complex number. Note that
		$$\lagrange{b^i}{p}=(-1)^i = \dfrac{\xi_{j+2} (\tau^i \phi \tau^{-i})}{\xi_{j+2}(\phi)},$$
		therefore
		$$\tr \psi_j (\sigma \phi) = \sum_{i=1}^{p-1} (-1)^i \xi_{j+2}(\phi) \eta(\sigma)^{b^i} = \xi_{j+2} (\phi) \sum_{a=1}^{p-1} \lagrange{a}{p} (e^{2 \pi i/p})^a =  \xi_{j+2} (\phi) \sqrt{\lagrange{-1}{p} p},$$ where the last equality follows from the Gauss summation formula.
	\end{proof}
	
	\subsection{The proof of Theorem \ref{maintheorem}}\label{sub:proof}
	
	\begin{proof}
		Let $\beta(x,y) = (x',y')$ be the change of variables described in the proof of Lemma \ref{lem:good_model}, let ``$\red$'' be the reduction map: $X (\overline{K}) \rightarrow \tilde{X}(\overline{k})$ and ``$\lift$'' be any section of it (which exists by smoothness of $\tilde{X}$). Then we can compute the action of a Galois automorphism $\gamma$ on the reduced curve $\tilde{X}(\overline{k})$ via the composition $\red \circ \beta \circ \gamma \circ \beta^{−1} \circ \lift$. In particular we will do it for $\gamma = \sigma \Frob_K$; then we have that
		$$ \tr (\rho_\ell (\sigma \Frob_K)) = |k| + 1 - A $$
		where $A$ is the number of points on the reduced curve fixed by the map $\red \circ \beta \circ \sigma \Frob_K \circ \beta^{−1} \circ \lift$ constructed above (see \cite[Theorem 1.5 and Remark 1.7]{1809.10208} and \cite[\S 6.5]{quotients}).
		
		Let $(\tilde x, \tilde y ) \in \tilde X(\overline{k})$, then since $\sigma$ is a wild inertia element we have
		\begin{eqnarray*}
			\left(\tilde{x},\tilde{y}\right) \xrightarrow{\lift} \left(x,y\right) \xrightarrow{\beta^{-1}} \left(x(\alpha_2-\alpha_1) +\alpha_1,y(\sqrt{\alpha_2-\alpha_1})^p\right) \\
			\xrightarrow{\sigma \Frob_K} \left(\sigma(\Frob_K(x)) \sigma(\alpha_2-\alpha_1) + \alpha_2, \sigma(\Frob_K(y))(\sigma(\sqrt{\alpha_2-\alpha_1}))^p\right)\\
			\xrightarrow{\beta} \left(\dfrac{\sigma(\Frob_K(x)) \sigma(\alpha_2-\alpha_1) + \alpha_2-\alpha_1}{\alpha_2-\alpha_1}, \sigma(\Frob_K(y))\dfrac{(\sigma(\sqrt{\alpha_2-\alpha_1}))^p}{(\sqrt{\alpha_2-\alpha_1})^p}\right)\\
			=\left( \sigma(\Frob_K(x)) \dfrac{\sigma(\alpha_2-\alpha_1)}{\alpha_2-\alpha_1}+1, \sigma(\Frob_K(y))\dfrac{(\sigma(\sqrt{\alpha_2-\alpha_1}))^p}{(\sqrt{\alpha_2-\alpha_1})^p}\right) \xrightarrow{\red} \left(\tilde{x}^{|k|}+1,\tilde{y}^{|k|}\right).
		\end{eqnarray*}
		
		Here we use the following facts: since $\sigma$ belongs to $I_K$, for every algebraic integer $x \in F$, $x$ and $\sigma(x)$ reduce to the same element of $\overline{k}$. If in addition $x \neq 0$ then, since $\sigma$ is wild, $\sigma(x)/x$ reduces to $1$ (again by \cite[Lemma 3.3]{1812.05651}). Finally, by definition $\Frob_K(x)$ reduces to $\tilde{x}^{|k|}$. We deduce that $A$ is equal to the number of solutions (including the point at infinity) of the following system of equations:
		\begin{align}\label{sys}\left\lbrace
			\begin{array}{cl}
				x&=x^{|k|}+1  \\
				y&=y^{|k|} \\
				y^2&=x^p-x;
			\end{array}\right.
		\end{align}
		
		As in Section \ref{sec:Frobenius} let $n = f_{K/\Q_p}$, so $|k|=p^n$. If $n=1$, then this system has $0$ affine solutions if $p \equiv 3 \pmod 4$ and $2p$ affine solutions if $p \equiv 1 \pmod 4$, so $A=1$ or $2p+1$ respectively. Therefore
		\[
		\tr (\rho_\ell (\sigma \Frob_K)) = - \lagrange{-1}{p} p,
		\]
		and so $\tr (\psi(\sigma \phi))=\dfrac{\tr (\rho_\ell(\sigma \Frob_K))}{\chi(\Frob_K)}=\dfrac{-\lagrange{-1}{p}p}{\sqrt{\lagrange{-1}{p}p}} = - \sqrt{\lagrange{-1}{p} p}$.
		
		For general odd $n$, we obtain the same system of equations independently of the curve $X$ we use, as long as it is defined over a $p$-adic field $K$ with $f_{K/\Q_p}=n$ and it satisfies the conditions of Theorem \ref{maintheorem}. Let $X/\Q_p : y^2= x^p - p$ as in Remark \ref{example}, and let $X_K$ be the base change of $X$ to the field $K$ given by the unique unramified extension of $\Q_p$ of degree $n$. The polynomial $x^p-p$ is irreducible over $K$, as any root gives a ramified extension, and $v_{K}(\Delta)=v_{\Q_p}(\Delta)=2p-1$ is coprime to $p-1$. Let $\rho_\ell'$ be the $\ell$-adic Galois representation attached to $\Jac(X)$ and let $\rho_\ell$ be the $\ell$-adic Galois representation attached to $\Jac(X_K)$; then:
		\begin{enumerate}
			\item $\rho_\ell'$ and $\rho_\ell$ have the same restriction to the inertia subgroup;
			\item $\rho_\ell(\Frob_K)$ acts as the $n$-th power of $\rho_\ell'(\Frob_{\Q_p})$.
		\end{enumerate}
		
		So $\rho_\ell(\sigma)\rho_\ell(\Frob_K) =\rho_\ell'(\sigma)\rho_\ell'(\Frob_{\Q_p})^n$. Notice that, by Section \ref{sec:Frobenius}, since $n-1$ is even, we have that $\rho_\ell'(\Frob_{\Q_p})^{n-1}$ is the scalar matrix with eigenvalue $\left(\lagrange{-1}{p}p\right)^{(n-1)/2}$. Therefore
		$$ \tr (\rho_\ell (\sigma \Frob_K)) = \left(\lagrange{-1}{p}p\right)^{(n-1)/2}  \tr (\rho_\ell'(\sigma \Frob_{\Q_p})) = -\left(\lagrange{-1}{p}p\right)^{(n+1)/2}.$$
		
		We conclude, since
		$$\tr (\psi(\sigma \phi))=\dfrac{\tr (\rho_\ell(\sigma \Frob_K))}{\chi(\Frob_K)}=\dfrac{-\left(\lagrange{-1}{p}p\right)^{(n+1)/2}}{\sqrt{\lagrange{-1}{p}p}^n} = - \sqrt{\lagrange{-1}{p} p}.$$
	\end{proof}
	
	\section{Applications}\label{applications}
	
	In this section we present a few examples and applications of Theorem \ref{maintheorem} and of the tools used throughout the paper.
	
	\begin{enumerate}
		\item By the computation made in Section \ref{sub:proof}, we find that the number $A-1$ of affine solutions of the system \eqref{sys} is
		\[
		A - 1 = |k| - \tr(\rho_\ell(\sigma \Frob_K)) = p^n + \left(\lagrange{-1}{p} p\right)^{(n+1)/2}.
		\]
		
		\item We can express the representation $\psi$ given in Theorem \ref{maintheorem} in terms of the characters introduced in Section \ref{irreps_G}. With the same notation, we have that
		$$\psi = \psi_2 = \Ind_{C_p \times C_2^2}^G \xi_4 \otimes \eta.$$
		
		A few examples are the following:

		\begin{enumerate}
			\item For $p=5$, the group $I_G$ is isomorphic to $C_5 \rtimes C_8$ in \cite{groupnames}. The restriction of $\rho_\ell$ to inertia is given by $\rho_{10}$. For odd $f_{K/\Q_p}$, we have $G \cong C_2^2\cdot F_5$ in \cite{groupnames}, with $\psi=\rho_{13}$ (here the class denoted $10A$ is the one generated by $\sigma \phi$).
			\item For $p=7$, $I_G \cong C_7 \rtimes C_{12}$ and the corresponding representation is $\rho_{14}$. For odd $f_{K/\Q_p}$, we have $G \cong \Dic_7 \rtimes C_6$ with $\psi = \rho_{18}$ (here the class $14A$ is the one generated by $\sigma \phi$).
		\end{enumerate}
		
		\item In Section \ref{irreps_G}, we assumed that $p \neq 3$. For $p=3$, Theorem \ref{maintheorem} still holds and a complete proof is presented in \cite[\S 3]{1812.05651}. Both the results of \cite{1812.05651} and of this paper have been implemented in MAGMA and are available on \cite{github}.
		
		\item Under the assumptions of Theorem \ref{maintheorem}, it is possible to compute the exponent $N$ of the conductor of the curve $X$, by using \cite[\S 11, Theorem 11.3]{M2D2}. In fact, by the description of the cluster picture of $X$ made in Remark \ref{cluster}, there is only one Galois orbit of the set of roots of $f$, corresponding to the only proper cluster. Let $\alpha_1$ be any root of $f$. Then
		\[
		N = v_K(\Delta_{K(\alpha_1)/K}) - [K(\alpha_1):K] + f_{K(\alpha_1)/K} + 2g,
		\]
		where $\Delta_{K(\alpha_1)/K}$ is the discriminant of the field extension $K(\alpha_1)/K$. Now, since $K(\alpha_1)/K$ is totally ramified of degree $p$ and since $2g=p-1$, we obtain simply 
		$$N = v_K(\Delta_{K(\alpha_1)/K}).$$
		
		For example if $f(x)=x^p-p$, as in Remark \ref{example}, then $N=2p-1$.

	\end{enumerate}
	
	\section{Acknowledgements}
	The author wishes to thank her advisor Tim Dokchitser for the supervision throughout the writing of this paper, Davide Lombardo and the anonymous referee for their careful reading and helpful comments, Pip Goodman for the proof of Lemma \ref{pip}, and Nuno Freitas and John Voight for suggesting to compute the conductor.
	
	The research was supported by EPSRC.


\begin{thebibliography}{99}
		%
		\bibitem{1812.05651}
		N.~Coppola, Wild Galois representations: Elliptic curves over a 3-adic field, \emph{Acta Arith.}, {\bf 195}(3) (2020) 289--303.
		%
		\bibitem{github}
		N.~Coppola, Wild Galois Repreentations, https://github.com/NirvanaC93/Wild-Galois-Representations-Magma (2021).
		\bibitem{Corn}
		G.~Cornelissen, Two-torsion in the Jacobian of hyperelliptic curves over finite fields, \emph{Arch. Math.} {\bf 77} (2001) 241--246.
		%
		\bibitem{corsil} J.S. Milne, \emph{Abelian Varieties}, in G. Cornell, J.H. Silverman, \emph{Arithmetic geometry}, Springer-Verlag (1986).
		%
		\bibitem{Deligne}
		P.~Deligne, La conjecture de Weil: I. \emph{Publications Mathématiques de l'IHÉS}, {\bf 43} (1974) 273--307.
		%
		%
		\bibitem{c9f01b439a3c48dfac7d37fb5b576af4}
		T.~Dokchitser and V.~Dokchitser, Euler factors determine local Weil representations, \emph{Crelle} {\bf 717} (2016) 35--46.
		%
		\bibitem{M2D2}
		T.~Dokchitser, V.~Dokchitser, C.~Maistret and A.~Morgan, Arithmetic of hyperelliptic curves over local fields, \emph{arXiv e-prints} (2018) arxiv:1808.02936.
		%
		\bibitem{1809.10208}
		T.~Dokchitser, V.~Dokchitser and A.~Morgan, Tate module and bad reduction, \emph{Proc. Amer. Math. Soc.} {\bf 149} (2021) 1361--1372.
		%
		\bibitem{quotients}
		T.~Dokchitser and V.~Dokchitser, Quotients of hyperelliptic curves and etale cohomology, \emph{Quaterly J. Math.} {\bf 69}(2) (2018) 747--768.
		%
		\bibitem{groupnames}
		T.~Dokchitser, Group names, (groupnames.org).
		%
		\bibitem{GreenbergSerre}
		M.J. Greenberg and J.P.~Serre, \emph{Local Fields}, (Springer New York, 1995).
		%
		%
		%
		\bibitem{milne5}
		J.~Milne, Abelian Varieties, \emph{Arithmetic Geometry} \S 5, (Springer, New York, 1986) 103--150.
		%
		\bibitem{milne7}
		J.~Milne, Jacobian Varieties, \emph{Arithmetic Geometry} \S 7, (Springer, New York, 1986) 167--212.
		%
		%
		\bibitem{10.2307/1970722}
		J.P.~Serre and J.~Tate, Good reduction of abelian varieties, \emph{Annals of Mathematics} {\bf 88}(3) (1968) 492--517.
		%
		\bibitem{Serre}
		J.P.~Serre, \emph{Linear Representations of Finite Groups}, (Springer-Verlag, New York, 1977).
		%
		%
		\bibitem{Yelton}
		J.~Yelton, An abelian subextension of the dyadic division field of a hyperelliptic Jacobian, \emph{Mathematica Slovaca} {\bf 69}(2) (2019) 357--370.
		
	\end{thebibliography}
\end{document}